\author{K\'aroly J. B\"or\"oczky
\thanks{Alfr\'ed R\'enyi Institute of Mathematics, Budapest, Hungary, boroczky.karoly.j@renyi.hu}
\thanks{supported by Hungarian Grant NKFIH 132002}
\and 
G\'abor Lugosi\thanks{Department of Economics and Business, Pompeu
  Fabra University, Barcelona, Spain, gabor.lugosi@upf.edu}
\thanks{ICREA, Pg. Lluís Companys 23, 08010 Barcelona, Spain}
\thanks{Barcelona Graduate School of Economics}
\thanks{
supported by
the Spanish Ministry of Economy and Competitiveness,
Grant MTM2015-67304-P and FEDER, EU. 
}
\and 
Matthias Reitzner
\thanks{Institut f\"ur Mathematik,
Universit\"at Osnabr\"uck, Germany, matthias.reitzner@uni-osnabrueck.de}
\thanks{supported by the German Research Foundation DFG-GRK 1916}
}
\title{Facets of high-dimensional Gaussian polytopes}
\def\1{{\mathds 1}}
\def\a{{\alpha}}
\def\b{{\beta}}
\def\d{{\delta}}
\def\l{{\lambda}}
\def\p{{\phi}}
\DeclareMathOperator\lln{{lln}}
\newcommand{\E}{\mathbb{E}}
\def\N{\mathbb{N}}
\def\P{\mathbb{P}}
\newcommand{\R}{\mathbb{R}}
\newcommand{\Rd}{\R^d}
\def\bB{\boldsymbol{B}}
\newcommand{\var}{\mathrm{var}}
\begin{document}

\maketitle

\newtheorem{theo}{Theorem}[section]
\newtheorem{coro}[theo]{Corollary}
\newtheorem{lemma}[theo]{Lemma}
\newtheorem{remark}[theo]{Remark}
\newtheorem{prop}[theo]{Proposition}
\newtheorem{conj}[theo]{Conjecture}
\newtheorem{example}[theo]{Example}

\begin{abstract}
We study the number of facets of the convex hull of $n$ independent standard Gaussian points in $\Rd$.
In particular, we are interested in the expected number of facets when the dimension is allowed to grow with the 
sample size. We establish an explicit asymptotic formula that is valid whenever $d/n\to 0$. We also obtain
the asymptotic value when $d$ is close to $n$.
\end{abstract}

\section{Introduction}
\label{secmain}

The convex hull $[X_1,\ldots,X_n]$ of $n$ independent standard Gaussian samples $X_1,\ldots,X_n$ from $\R^d$ is the Gaussian polytope $P^{(d)}_n$. For fixed dimension $d$, the face numbers and intrinsic volumes of $P_n^{(d)}$ as $n$ tends to infinity are well understood by now. For $i=0\ldots,d$ and polytope $Q$, let $f_i(Q)$ denote the number of $i$-faces of $Q$ and let $V_i(Q)$ denote the $i$th intrinsic volume of
$Q$. The asymptotic behavior of the expected value of the number of facets $f_{d-1}(P^{(d)}_n)$ as $n \to \infty$ was provided by R\'enyi, Sulanke \cite{ReS63} if $d=2$, and by Raynaud \cite{Ray70} if $d\geq 3$. Namely,  they proved that, for any fixed $d$,
\begin{equation}
\label{eq:facetnlargefixd}
\E f_{d-1}(P^{(d)}_n)
=
2^d \pi^{\frac {d-1}2} d^{- \frac 12} 
( \ln  n)^{ \frac {d-1}2 } (1+o(1))
\end{equation}
as $n \to \infty$. For $i=0,\ldots,d$, expected value of $V_i(P^{(d)}_n)$ as $n \to \infty$ was computed by 
Affentranger \cite{Aff91}, and that of $f_i(P^{(d)}_n)$ was determined Affentranger, Schneider \cite{AfS92} and Baryshnikov, Vitale \cite{BaV94}, see  Hug, Munsonius, Reitzner \cite{HMR04} and Fleury \cite{Fle12} for a different approach. More recently, Kabluchko and Zaporozhets \cite{KaZa19, KaZa20} proved explicit expressions for the expected value of $V_d(P^{(d)}_n)$ and the number of $k$-faces $f_k(P^{(d)}_n)$. Yet these formulas are complicated and it is not immediate how to deduce asymptotic results for large $n$ high dimensions $d$.

After various partial results, including the variance estimates of Calka, Yukich \cite{CaY15} and Hug, Reitzner \cite{HuR05},
central limit theorems were proved for  $f_i(P^{(d)}_n)$ and $V_d(P^{(d)}_n)$ by B\'ar\'any and Vu \cite{BaV07}, and for $V_i(P^{(d)}_n)$ by B\'ar\'any and Th\"ale \cite{BaT17}. These results have been strengthened considerably by Grote and Th\"ale \cite{GrTh18}.
The interesting question whether $\E f_{d-1}(P^{(d)}_n)$ is an increasing function in $n$ was answered in the positive by Kabluchko and Th\"ale \cite{KaTh18}. It would be interesting to investigate the monotonicity behavior of the facet number if $n$ and $d$ increases simultaneously.

The ``high-dimensional'' regime, that is, when $d$ is allowed to grow with $n$, is of interest in numerous
applications in statistics, signal processing, and information theory. 
The combinatorial structure of $P^{(d)}_n$, when $d$ tends to infinity and $n$ grows proportionally with $d$, 
was first investigated by Vershik and Sporyshev \cite{VeS92},  and later Donoho and Tanner \cite{DoT09} provided a satisfactory description. For any $t>1$, Donoho, Tanner \cite{DoT09} determined the optimal $\varrho(t)\in(0,1)$ such that if $n/d$ tends to $t$, then $P^{(d)}_n$ is essentially $\varrho(t)d$-neighbourly 
(if $0<\eta<\varrho(t)$ and $0\leq k\leq \eta d$, then $f_k(P^{(d)}_n)$ is asymptotically ${n \choose k+1}$). See
Donoho \cite{Don06}, Cand\'es, Romberg, and Tao \cite{CRT06}, Cand\'es and Tao \cite{CaT05,CaT06},
 Mendoza-Smith, Tanner, and Wechsung \cite{MTW18+}.

In this note, we consider $f_{d-1}(P^{(d)}_n)$, the number of facets, when both $d$ and $n$ tend to infinity. 
Our main result is the following estimate for the expected number of facets of the Gaussian polytope. 
The implied constant in $O(\cdot)$ is always some absolute constant. 
We write $\lln x$ for $\ln(\ln x)$.

\begin{theo}
\label{th:facetnlarge}
Assume $P^{(d)}_n$ is a Gaussian polytope. Then for $d \geq 78$  and $n \geq e^e d$, we have
$$
\E f_{d-1}(P^{(d)}_n)
=
2^d \pi^{\frac {d-1}2} d^{- \frac 12} 
e^{ \frac {d-1}2 \lln  \frac nd   
- \frac{d-1}4 \frac {\lln \frac nd }{\ln \frac nd  }
+(d-1) \frac {\theta}{\ln \frac nd}  
\  
+O( \sqrt d e^{ - \frac 1{10} d})} 
$$
with $\theta=\theta(n,d) \in [-34,2]$.
\end{theo}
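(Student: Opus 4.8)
The plan is to reduce the problem to a one-dimensional integral and then evaluate it by Laplace's method, tracking all constants. By linearity of expectation and the exchangeability of the sample, $\E f_{d-1}=\binom{n}{d}\,\P(X_1,\dots,X_d \text{ span a facet})$. When the $d$ points $X_1,\dots,X_d$ are affinely independent they span a hyperplane $H$, which I parametrize by its unit normal $u\in S^{d-1}$ and its nonnegative distance $h$ from the origin. These $d$ points span a facet exactly when all remaining $n-d$ points fall into one of the two open halfspaces bounded by $H$; conditionally on $(u,h)$ this has probability $\Phi(h)^{n-d}+(1-\Phi(h))^{n-d}$, depending only on $h$. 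By the rotational invariance of the Gaussian law, $u$ is uniform and independent of $h$, and applying the affine Blaschke--Petkantschin formula (the point configuration inside $H$ integrates out into a constant, since the facet probability does not depend on it) I expect the distance to have density $\rho_d(h)=\sqrt{2d/\pi}\,e^{-dh^2/2}$ on $[0,\infty)$. This gives
\[
\E f_{d-1}=\binom{n}{d}\sqrt{\tfrac{2d}{\pi}}\int_0^\infty\Big(\Phi(h)^{n-d}+(1-\Phi(h))^{n-d}\Big)e^{-dh^2/2}\,dh .
\]

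Next I would discard the second term: since $1-\Phi(h)\le \tfrac12$ for $h\ge 0$, its contribution is at most $2^{-(n-d)}\sqrt{\pi/(2d)}$ times the prefactor, and the ratio of this contribution to the main term is $e^{-\Omega(n)}$ (using $n\ge e^e d$, so that $(n-d)\ln 2$ dwarfs $d\ln(n/d)$), hence well inside the claimed error. It remains to analyze $I:=\int_0^\infty \Phi(h)^{n-d}e^{-dh^2/2}\,dh$. Writing the integrand as $e^{\psi(h)}$ with $\psi(h)=(n-d)\ln\Phi(h)-\tfrac d2 h^2$, the saddle point $h_0$ solves $(n-d)\phi(h_0)/\Phi(h_0)=d\,h_0$. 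Since $h_0$ is large, $\Phi(h_0)\approx1$ and $\phi(h_0)\approx d h_0/(n-d)$, which yields $h_0^2=2\ln\frac{n-d}{d}-\lln\frac nd-\ln(4\pi)+o(1)$, so $h_0\sim\sqrt{2\ln(n/d)}$. One checks $\psi''(h_0)\approx -d h_0^2$ and that $(n-d)(1-\Phi(h_0))\to d$ at the saddle, whence $\Phi(h_0)^{n-d}\approx e^{-d}$, and the Gaussian approximation around $h_0$ gives $I\approx \Phi(h_0)^{n-d}e^{-dh_0^2/2}\sqrt{2\pi/|\psi''(h_0)|}$.

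Finally I would assemble the pieces, inserting Stirling's formula for $d!$ into $\binom{n}{d}$. The factor $e^{-d}$ from $\Phi(h_0)^{n-d}$ cancels the $e^{d}$ from Stirling; the factor $e^{-dh_0^2/2}=(n/d)^{-d}(2\pi)^{d/2}h_0^{d}e^{o(d)}$ supplies $2^d\pi^{d/2}$ (via $(2\pi)^{d/2}2^{d/2}$ from $h_0^d\sim (2\ln(n/d))^{d/2}$), and the surviving numerical constants collapse to $\pi^{-1/2}d^{-1/2}$, producing the prefactor $2^d\pi^{(d-1)/2}d^{-1/2}$. The dependence $h_0^{d}/\sqrt{|\psi''(h_0)|}$ contributes $(\ln(n/d))^{(d-1)/2}=e^{\frac{d-1}2\lln(n/d)}$, while the second-order term $-\lln(n/d)$ in $h_0^2$ generates the correction $-\frac{d-1}4\,\frac{\lln(n/d)}{\ln(n/d)}$.

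The main difficulty is that $d$ and $n/d$ are large simultaneously, so I must expand in $1/\ln(n/d)$ with coefficients proportional to $d$ while keeping the remainder super-exponentially small in $d$. Every remaining contribution to the exponent---the $O(1)$ additive constants in $h_0^2$, the discrepancy between $n$ and $n-d$ (note $d\ln\frac{n}{n-d}\le 2d^2/n\le 2d/\ln(n/d)$, using $n\ge e^e d$ and $\ln(n/d)\le n/d$), the Stirling correction, and the Laplace remainder---is of size $O(d/\ln(n/d))$ and must be controlled with fully explicit, non-asymptotic inequalities for $\Phi$, $\ln\Phi$ and the Laplace error. Bundling these bounded contributions yields the term $(d-1)\,\theta/\ln(n/d)$ with an explicit interval $\theta\in[-34,2]$, together with a super-exponentially small remainder $O(\sqrt d\,e^{-d/10})$. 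This constant-tracking is the crux of the argument and is what forces the hypothesis $d\ge 78$.
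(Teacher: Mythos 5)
Your starting identity is the same one the paper uses: your Blaschke--Petkantschin derivation of the distance density $\sqrt{2d/\pi}\,e^{-dh^2/2}$ reproduces \cite[Theorem~3.2]{HMR04}, and your integral agrees with \eqref{eq:main-integral} after the change of variance convention (discarding the $(1-\Phi)^{n-d}$ half is also fine, since $\binom nd 2^{-(n-d)}\leq e^{-cn}$ for $n\geq e^e d$). From there your route genuinely diverges: you run a saddle-point analysis directly in the variable $h$, whereas the paper substitutes $u=1-\Phi(y)$, recognizes the exact identity $\E f_{d-1}=2^d\pi^{(d-1)/2}d^{-1/2}\,\E g_d(U)$ with $U\sim\bB(n-d+1,d)$, and then only needs crude bounds: concentration of $U$ on $[e^{-2}\frac dn,2\frac dn]$ plus the min/max of $g_d$ there. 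Your leading-order bookkeeping is correct (the saddle location, $\psi''(h_0)\approx -dh_0^2$, $\Phi(h_0)^{n-d}\approx e^{-d}$, and the assembly of the prefactor $2^d\pi^{(d-1)/2}d^{-1/2}$ all check out), and this is a legitimate alternative way to see where every term of the formula comes from.

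The gap is in your error accounting, and it is not cosmetic. You assert that the Stirling correction and the Laplace remainder are each of size $O(d/\ln\frac nd)$ in the exponent. They are not: Stirling's series for $d!$ contributes a multiplicative factor $1+\Theta(1/d)$ to $\binom nd$, and the second-order Laplace correction (governed by $\psi''''w^4$ and $(\psi''')^2w^6$ with $w=|\psi''|^{-1/2}$) is likewise $1+\Theta(1/d)$. A $\Theta(1/d)$ term in the exponent is absorbed by $(d-1)\theta/\ln\frac nd$ with $|\theta|\leq 34$ only when $\ln\frac nd\lesssim d^2$, and it is never $O(\sqrt d\,e^{-d/10})$; so for $n\gtrsim d\,e^{d^2}$ your argument, as described, does not deliver the stated error term. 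These two $\Theta(1/d)$ corrections do in fact cancel --- but only because $\binom nd\bB(n-d+1,d)=\frac1d$ \emph{exactly}, so the binomial coefficient is eaten by the normalization of the Beta density, which integrates to $1$ with no asymptotic evaluation at all. Exhibiting that cancellation within your framework means either computing both $1/d$-coefficients explicitly and checking they agree, or performing the substitution $u=1-\Phi(h)$ --- which is precisely the paper's device and collapses your saddle-point step into the statement that a $\bB(n-d+1,d)$ variable concentrates near $d/n$ (Lemmas \ref{le:betalow-} and \ref{le:betalarge}). You should either add that reduction, or weaken your conclusion to an error factor $e^{O(1/d)}$, or restrict to $\ln\frac nd=O(d^2)$. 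A secondary (fixable) omission: you never bound the contribution of $h$ far from $h_0$, the analogue of the paper's two tail estimates, and the inversion of the saddle equation must be carried out with an explicit bounded remainder in place of your $o(1)$, as in the paper's Lemma giving $\delta\in(0,16)$.
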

When $n/d$ tends to infinity as $d \to \infty$, Theorem \ref{th:facetnlarge} provides the asymptotic formula
$$
\E f_{d-1}(P^{(d)}_n)
=
\left((4\pi+o(1)) \ln \frac nd\right)^{\frac {d-1}2}~.
$$
If $n/(d e^d)\to \infty$, then we have 
$\frac{d} {\ln \frac nd} \to 0 $
and hence
$$
\E f_{d-1}(P^{(d)}_n)
=
2^d \pi^{\frac {d-1}2} d^{- \frac 12} 
e^{ \frac {d-1}2 \lln  \frac nd   
- \frac{d-1}4 \frac {\lln \frac nd }{\ln \frac nd  }
+o(1)} 
$$
as $d \to \infty$.
In the case when $n$ grows even faster such that $(\ln n)/(d\ln d) \to \infty$,
the asymptotic formula simplifies to the result 
(\ref{eq:facetnlargefixd}) of R\'enyi, Sulanke \cite{ReS63} and Raynaud \cite{Ray70} for fixed 
dimension.

\begin{coro}
\label{facetnlargecor}
Assume $P^{(d)}_n$ is a Gaussian polytope.  If  $(\ln n)/(d\ln d) \to \infty$, we have 
$$
\E f_{d-1}(P^{(d)}_n)
=
2^d \pi^{\frac {d-1}2} d^{- \frac 12} 
( \ln  n)^{ \frac {d-1}2 } (1+o(1))~.
$$
\end{coro}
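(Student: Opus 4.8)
The plan is to derive Corollary \ref{facetnlargecor} directly from Theorem \ref{th:facetnlarge} by showing that under the hypothesis $(\ln n)/(d\ln d)\to\infty$, all three correction terms in the exponent collapse. The key observation is that this hypothesis forces $n/d$ to grow extremely fast: since $\ln n \gg d\ln d \geq d$, we certainly have $n/(d e^d)\to\infty$, so we are already in the regime described after the theorem where $d/\ln\tfrac nd \to 0$. Thus the third exponent term $(d-1)\theta/\ln\tfrac nd$ is $o(1)$ (as $\theta$ is bounded), and the error term $O(\sqrt d\, e^{-d/10})$ is trivially $o(1)$. The whole content of the corollary therefore lies in replacing the first two terms $\tfrac{d-1}2\lln\tfrac nd - \tfrac{d-1}4\frac{\lln\tfrac nd}{\ln\tfrac nd}$ by the cleaner expression coming from $(\ln n)^{(d-1)/2}$.

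First I would compare $\lln\tfrac nd$ with $\lln n=\ln\ln n$. Writing $\ln\tfrac nd=\ln n-\ln d$, I would show that the hypothesis makes $\ln d$ negligible relative to $\ln n$, so that $\lln\tfrac nd=\ln(\ln n-\ln d)=\lln n+\ln\!\bigl(1-\tfrac{\ln d}{\ln n}\bigr)$, and the last term is $O(\ln d/\ln n)$. To control the prefactor $e^{\frac{d-1}2\lln\frac nd}=(\ln\tfrac nd)^{(d-1)/2}$ against $(\ln n)^{(d-1)/2}$, I would factor out and examine
\[
\left(\frac{\ln\frac nd}{\ln n}\right)^{\frac{d-1}2}
=\left(1-\frac{\ln d}{\ln n}\right)^{\frac{d-1}2},
\]
whose logarithm is $\frac{d-1}2\ln\!\bigl(1-\tfrac{\ln d}{\ln n}\bigr)=-\tfrac{d-1}2\cdot\tfrac{\ln d}{\ln n}(1+o(1))$. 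The hypothesis $(\ln n)/(d\ln d)\to\infty$ is exactly what forces $\tfrac{d\ln d}{\ln n}\to0$, so this logarithm tends to $0$ and the ratio tends to $1$.

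Next I would handle the second exponent term $-\tfrac{d-1}4\frac{\lln\frac nd}{\ln\frac nd}$. Since $\lln\tfrac nd\sim\lln n$ and $\ln\tfrac nd\sim\ln n$, this term is comparable to $-\tfrac{d-1}4\cdot\tfrac{\ln\ln n}{\ln n}$; its absolute value is at most a constant times $\tfrac{d\ln\ln n}{\ln n}$, and since $\tfrac{d}{\ln n}\le\tfrac{1}{\ln d}\to0$ under the hypothesis (even after multiplying by the slowly growing $\ln\ln n$), this term is $o(1)$. Collecting all pieces, the exponent becomes $\tfrac{d-1}2\lln n+o(1)$, and together with the ratio of prefactors tending to $1$ I obtain $e^{\frac{d-1}2\lln n}(1+o(1))=(\ln n)^{(d-1)/2}(1+o(1))$, which is the claimed form.

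The main obstacle is bookkeeping the interaction between the two independent small quantities $\tfrac{\ln d}{\ln n}$ and $\tfrac{d\ln\ln n}{\ln n}$, and verifying that the hypothesis $(\ln n)/(d\ln d)\to\infty$ is strong enough to kill \emph{all} of them simultaneously — in particular the prefactor correction, which is multiplied by the large factor $\tfrac{d-1}2$ and so requires the full strength $\tfrac{d\ln d}{\ln n}\to0$ rather than merely $\tfrac{\ln d}{\ln n}\to0$. Once one checks that this single hypothesis dominates each error, the rest is a routine Taylor expansion of $\ln(1-x)$ and assembly of $o(1)$ terms.
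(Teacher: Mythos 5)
Your derivation follows the paper's own (implicit) route: the corollary is obtained there simply by specializing Theorem \ref{th:facetnlarge}, exactly as you do, and your identification of the prefactor correction $\left(1-\frac{\ln d}{\ln n}\right)^{\frac{d-1}{2}}$ as the term that requires the full strength of the hypothesis $d\ln d=o(\ln n)$ is the right one.

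One step in your write-up does not close as written. To dispose of the term $\frac{d-1}{4}\frac{\lln (n/d)}{\ln (n/d)}$ you need $\frac{d\,\lln n}{\ln n}\to 0$, and you justify this by the chain $\frac{d}{\ln n}\le\frac{1}{\ln d}$ ``even after multiplying by the slowly growing $\ln\ln n$''; but the resulting bound $\frac{\lln n}{\ln d}$ need not tend to $0$ under the hypothesis (take $d=\lln n$, for which $\frac{\lln n}{\ln d}=\frac{\lln n}{\ln\lln n}\to\infty$). The needed fact $d\,\lln n=o(\ln n)$ is nevertheless a genuine consequence of $d\ln d=o(\ln n)$: if $d\ge\sqrt{\ln n}$ then $\ln d\ge\frac 12\lln n$, so $d\,\lln n\le 2d\ln d=o(\ln n)$, while if $d<\sqrt{\ln n}$ then $d\,\lln n<\sqrt{\ln n}\,\lln n=o(\ln n)$. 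With that repair, and with the observation that the error term $O(\sqrt d\,e^{-d/10})$ is $o(1)$ only when $d\to\infty$ (for bounded $d$ the corollary is just the classical statement (\ref{eq:facetnlargefixd}), which does not need Theorem \ref{th:facetnlarge}), your argument is complete and coincides with the paper's.
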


There is a (simpler) counterpart of our main results stating the asymptotic behavior of the 
expected number of facets of $P_n^{(d)}$, if 
$n-d$ is \emph{small} compared to $d$, that is, if $n/d$ tends to one. 
\begin{theo}
\label{th:facetn-dsmall}
Assume $P^{(d)}_n$ is a Gaussian polytope. Then
for $n - d = o( d)$, we have
$$
\E f_{d-1}(P^{(d)}_n)
=
{n \choose d}
2^{-(n-d)+1} e^{\frac 1\pi \frac{(n-d)^2}d + O\left(\frac{(n-d)^3}{d^2}\right) +o(1) }
$$
as $d \to \infty$.
\end{theo}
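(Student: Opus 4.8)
The plan is to start from the exact integral-geometric identity for the expected number of facets and reduce everything to a single one-dimensional Gaussian integral. A $d$-element subset $\{X_{i_1},\dots,X_{i_d}\}$ spans (almost surely) a unique hyperplane $H$, and it carries a facet of $[X_1,\dots,X_n]$ precisely when all the remaining $n-d$ points lie on one side of $H$. Writing $p^+$ and $p^-=1-p^+$ for the standard Gaussian measures of the two half-spaces bounded by $H$, the conditional probability of this event is $(p^+)^{n-d}+(p^-)^{n-d}$, so by exchangeability
$$
\E f_{d-1}([X_1,\dots,X_n])=\binom{n}{d}\,\E\!\left[(p^+)^{n-d}+(p^-)^{n-d}\right].
$$
Setting $k=n-d$ and letting $h\ge 0$ be the distance from the origin to $H$, one has $\{p^+,p^-\}=\{\tfrac12-g(h),\tfrac12+g(h)\}$ with $g(h)=\int_0^h(2\pi)^{-1/2}e^{-s^2/2}\,ds$, hence $(p^+)^k+(p^-)^k=2^{-k}\big[(1+w)^k+(1-w)^k\big]$ with $w=2g(h)$. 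This already produces the prefactor $\binom{n}{d}2^{-(n-d)+1}$.

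The key distributional input is the law of $h$. Writing the $d$ points as the rows of a matrix $A$ with i.i.d.\ standard Gaussian entries, the hyperplane is $\{x:\langle x,u\rangle=c\}$ with $u=cA^{-1}\mathbf 1$ and $|u|=1$, so that $h=|c|=|A^{-1}\mathbf 1|^{-1}$ and $|A^{-1}\mathbf 1|^2=\mathbf 1^\top (AA^\top)^{-1}\mathbf 1$. Using the rotational invariance of the law of $AA^\top$ and the fact that the $(1,1)$ entry of the inverse of a $W_d(d,I)$ Wishart matrix is distributed as $1/\chi^2_1$, I would derive
$$
h\;\stackrel{d}{=}\;\frac{|Z|}{\sqrt d},\qquad Z\sim N(0,1).
$$
In particular $h=O(d^{-1/2})$ with overwhelming probability, so $w=a|Z|\,(1+O(Z^2/d))$ is small, where $a=\sqrt{2/(\pi d)}$.

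With this in hand the task reduces to evaluating $S:=\tfrac12\E[(1+w)^k+(1-w)^k]$. Using
$\tfrac12\big((1+w)^k+(1-w)^k\big)=e^{-kw^2/2+O(kw^4)}\cosh\!\big(kw+O(kw^3)\big)$ and replacing $w$ by $a|Z|$, the leading behaviour is governed by the exact Gaussian computation (where $\cosh(kaZ)$ may be replaced by $e^{kaZ}$ under the symmetric law of $Z$)
$$
\E\!\left[e^{-\frac{ka^2}{2}Z^2+kaZ}\right]=\frac{1}{\sqrt{1+ka^2}}\,\exp\!\left(\frac{k^2a^2}{2(1+ka^2)}\right).
$$
Taking logarithms, inserting $a^2=2/(\pi d)$, and using $ka^2=2k/(\pi d)=o(1)$ gives
$$
\ln S=\frac{k^2a^2}{2(1+ka^2)}-\tfrac12\ln(1+ka^2)+o(1)=\frac1\pi\frac{(n-d)^2}{d}+O\!\left(\frac{(n-d)^3}{d^2}\right)+o(1),
$$
which together with the prefactor $\binom{n}{d}2^{-(n-d)+1}$ is exactly the assertion of the theorem.

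The main obstacle is to show rigorously that all the approximation errors contribute only $o(1)$ to $\ln S$, uniformly over the regime $n-d=o(d)$. Three sources must be controlled: the cubic and quartic Taylor remainders $kw^3$ and $kw^4$ in the exponent, the multiplicative correction $1+O(Z^2/d)$ hidden in $w=a|Z|(1+O(Z^2/d))$, and the contribution of the tail where $|Z|$ is large, since there $kw$ is no longer small and one may not expand $\cosh$. On the bulk $\{|Z|\le\log d\}$ one has $kw^3=O(k(\log d)^3 d^{-3/2})=o(1)$ and $kw^4=O(k(\log d)^4 d^{-2})=o(1)$, so these remainders are negligible; note that the genuine term $O((n-d)^3/d^2)$ in the exponent is not an error but arises from expanding the exact Gaussian integral $\frac{k^2a^2}{2(1+ka^2)}$. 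For the tail one exploits that the Gaussian weight $e^{-Z^2/2}$ together with the factor $e^{-kw^2/2}$ dominates the growth of $\cosh(kw)$, so that $\{|Z|>\log d\}$ contributes a negligible amount to $S$. Assembling these bounds so that every discarded quantity is genuinely $o(1)$ in $\ln S$, uniformly in $(n,d)$, is the technical heart of the argument.
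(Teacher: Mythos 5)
Your reduction to a one-dimensional integral is sound and is, in essence, a self-contained rederivation of the paper's starting point: the identity $\E f_{d-1}=\binom{n}{d}\E\left[(p^+)^{n-d}+(p^-)^{n-d}\right]$ combined with your Wishart computation $h\stackrel{d}{=}|Z|/\sqrt d$ recovers exactly the formula $\E f_{d-1}=2\binom{n}{d}\frac{\sqrt d}{\sqrt\pi}\int\Phi(y)^{n-d}e^{-dy^2}\,dy$ of Hug--Munsonius--Reitzner that the paper quotes, and your exact Gaussian evaluation of the leading term produces the correct exponent. The problem is in the error control, specifically in the bulk/tail split at $|Z|=\log d$.

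Write $k=n-d$ and $a=\sqrt{2/(\pi d)}$. Against the Gaussian weight, your integrand behaves like $e^{-z^2/2-ka^2z^2/2}\cosh(kaz)$, whose maximum sits at $z^*\approx ka/(1+ka^2)\approx ka$. Whenever $\sqrt d\,\log d\ll k=o(d)$ (e.g.\ $k=d/\log d$), one has $z^*\gg\log d$, so $\{|Z|>\log d\}$ is not a negligible tail: it carries essentially all of $S$. Indeed, on $\{|Z|\le\log d\}$ you have $w\le a|Z|$, hence that region contributes at most $e^{ka\log d}=e^{O(\sqrt d)}$, whereas $S=e^{(1+o(1))k^2/(\pi d)}$ is far larger in this regime; the claim that ``$e^{-Z^2/2}$ together with $e^{-kw^2/2}$ dominates $\cosh(kw)$ on the tail'' is therefore false there, and your bounds $kw^3=O(k(\log d)^3d^{-3/2})=o(1)$ etc.\ are being verified on a region that misses the saddle point entirely. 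The errors are in fact still harmless at the true saddle --- there $kw^3\approx k(ka^2)^3=O(k^4/d^3)=o(k^3/d^2)$, which is absorbed by the $O\left((n-d)^3/d^2\right)$ term rather than being $o(1)$ --- but your decomposition cannot see this. Two ways to repair the argument: recentre the truncation at $z^*$ and redo the remainder estimates there, or, as the paper does, avoid truncation altogether by sandwiching $\ln\Phi(y/\sqrt d)$ between $-\ln 2+\frac{2}{\sqrt\pi}\frac{y}{\sqrt d}-\theta\,\frac{y^2}{d}$ with $\theta>-\frac12$ globally and $\theta\le c$ for $y\ge-\sqrt d$, so that the entire integral is compared from above and below with explicit Gaussian integrals and no region of $Z$ is ever discarded.
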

This complements a result of Affentranger and Schneider \cite{AfS92} stating the number of 
$k$-dimensional faces for $k \leq n-d$ and $n-d$ fixed,
$$
\E f_{k}(P^{(d)}_n)
=
{n \choose k+1}
(1 +o(1))~,
$$
as $d \to \infty$.

In the next section we sketch the basic idea of our approach, leaving the technical details to later 
sections. In Section \ref{sec:phi} we provide asymptotic approximations
for the tail of the normal distribution. 
In Section \ref{sec:beta} concentration inequalities are derived for the $\beta$-distribution.
Finally, in Sections \ref{sec:onedim} and \ref{sec:small}, Corollary \ref{facetnlargecor} and Theorem \ref{th:facetn-dsmall}
are proven.

\section{Outline of the argument}
\label{secidea}

For $z\in\R$, let
$$
\Phi(y)=\frac1{\sqrt{\pi}}\int\limits_{-\infty}^y e^{- s^2}\,ds,
\ \mbox {and } 
\p(y)=\Phi'(y)=\frac1{\sqrt{\pi}}e^{- y^2}~.
$$
Our proof is based on the approach of Hug, Munsonius, and Reitzner \cite{HMR04}. 
In particular, \cite[Theorem~3.2]{HMR04} states
that if $n\geq d+1$ and $X_1,\ldots,X_n$
are independent standard Gaussian points in $\R^d$, then
$$
\E f_{d-1}([X_1,\ldots,X_n])
=
{n \choose d} \P(Y \notin [Y_1, \dots, Y_{n-d}])~,
$$
where $Y,Y_{1},\dots,Y_{n-d}$ are independent real-valued random variables 
with $Y \stackrel{d}{=} N\left(0,\frac{1}{2d}\right)$ and  $Y_{i} \stackrel{d}{=} N\left(0,\frac{1}{2}\right)$ for $i=1,\ldots,n-d$.
This gives
\begin{eqnarray}\label{eq:main-integral}
\E f_{d-1}([X_1,\ldots,X_n])
&=&
2 {n \choose d}
\frac{\sqrt{d}}{\sqrt \pi}
\int\limits_{- \infty}^\infty \Phi(y)^{n-d} e^{-dy^2}\,dy
\\ &=&
2 {n \choose d}
\sqrt{d} \, \pi^{\frac {d-1}2}
\int\limits_{- \infty}^\infty \Phi(y)^{n-d} \p(y)^d \,dy~.
\end{eqnarray}
Note that similar integrals appear in the analysis of the expected number of $k$-faces for values of $k$ in the entire range
$k=0,\ldots,d-1$.
In our case, the analysis boils down to understanding the integral of
$\Phi(y)^{n-d} \p(y)^d$ over the real line.
By substituting
$ (1-u) = \Phi(y)$,
we obtain
$$
\int\limits_{- \infty}^\infty \Phi(y)^{n-d} \p(y)^d \,dy 
=
\int\limits_{0}^1 (1-u)^{n-d} \p(\Phi^{-1}(1-u))^{d-1}\,du~.
$$
Clearly, $n \geq d+2$ is the nontrivial range.
When
$n/d \to \infty $, $(1-u)^{n-d}$ is dominating, and we need to investigate 
the asymptotic behavior of $\p(\Phi^{-1}(1-u))$ as $u \to 0$. We show that the essential term 
is precisely $2u$. 
Hence, it makes sense to rewrite the integral as
$$
2^{d-1} \int\limits_{0}^1 (1-u)^{n-d} u^{d-1} \underbrace{\left( (2u)^{-1} \p(\Phi^{-1}(1-u))\right)^{d-1}}
_{=: g_d(u)} \,du~. 
$$
For $x,y>0$, the Beta-function is given by $\bB (x,y)= \int\limits_0^1 (1-u)^{x-1} u^{y-1} du $. 
It is well known that for $k,l \in \N$ we have 
$\bB (k,l)= 
\frac{(k-1)!(l-1)!}{(k+l-1)!}$. A random variable $U$ is $\bB_{(x,y)}$ distributed if its 
density is given by $\bB(x,y)^{-1} (1-u)^{x-1} u^{y-1}$.
With this, we have established the following identity:

\begin{prop}
\begin{eqnarray}\label{eq:exp-f-exp-U}
\E f_{d-1}([X_1,\ldots,X_n])
&=&
2^d \pi^{\frac {d-1}2} d^{- \frac 12} 
\E g_d(U) 
\end{eqnarray}
where 
$$g_d(u) = \left( (2u)^{-1} \p(\Phi^{-1}(1-u))\right)^{d-1}$$
and $U$ is a $\bB(n-d+1,d)$ random variable.
\end{prop}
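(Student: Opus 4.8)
The plan is to assemble the chain of identities already displayed in the outline and then read off the remaining integral as a normalized expectation against the Beta density. Starting from the second form of \eqref{eq:main-integral},
$$
\E f_{d-1}([X_1,\ldots,X_n])
=
2 {n \choose d} \sqrt{d}\, \pi^{\frac{d-1}2} \int_{-\infty}^\infty \Phi(y)^{n-d} \p(y)^d\,dy,
$$
I would perform the substitution $1-u=\Phi(y)$ and invoke the rewriting given just above the statement to replace the integral by $2^{d-1}\int_0^1 (1-u)^{n-d} u^{d-1} g_d(u)\,du$. This produces
$$
\E f_{d-1}([X_1,\ldots,X_n])
=
2^d {n \choose d} \sqrt{d}\, \pi^{\frac{d-1}2}
\int_0^1 (1-u)^{n-d} u^{d-1} g_d(u)\,du .
$$

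Next I would match the remaining integral to the Beta density. Since $U$ is $\bB(n-d+1,d)$ distributed, its density is $\bB(n-d+1,d)^{-1}(1-u)^{n-d} u^{d-1}$, so by the definition of expectation
$$
\int_0^1 (1-u)^{n-d} u^{d-1} g_d(u)\,du
=
\bB(n-d+1,d)\, \E g_d(U).
$$
It then remains only to simplify the scalar prefactor ${n \choose d}\,\bB(n-d+1,d)$. Using $\bB(n-d+1,d)=\frac{(n-d)!\,(d-1)!}{n!}$ together with ${n\choose d}=\frac{n!}{d!\,(n-d)!}$, the factorials cancel and leave ${n\choose d}\,\bB(n-d+1,d)=\frac{(d-1)!}{d!}=\frac1d$. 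Substituting this back gives
$$
\E f_{d-1}([X_1,\ldots,X_n])
=
2^d \pi^{\frac{d-1}2} \sqrt{d}\cdot\frac1d\cdot \E g_d(U)
=
2^d \pi^{\frac{d-1}2} d^{-\frac12}\, \E g_d(U),
$$
which is precisely the asserted identity \eqref{eq:exp-f-exp-U}.

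Since every ingredient is already in place, I do not expect a genuine obstacle; the argument is essentially pure bookkeeping. The one point deserving a little care is making the change of variables $1-u=\Phi(y)$ clean at the endpoints: as $y\to-\infty$ one has $\Phi(y)\to0$ (so $u\to1$) and as $y\to+\infty$ one has $\Phi(y)\to1$ (so $u\to0$), while $du=-\p(y)\,dy$ absorbs exactly one power of $\p(y)$ and reverses the limits, leaving the factor $\p(y)^{d-1}=\p(\Phi^{-1}(1-u))^{d-1}$ that defines $g_d$. Beyond that, the combinatorial identity ${n\choose d}\,\bB(n-d+1,d)=1/d$ is the single place where the dimensional factor $d^{-1/2}$ in the final constant is generated, so that cancellation is the step I would verify most carefully.
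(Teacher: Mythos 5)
Your proof is correct and follows exactly the chain of identities the paper itself uses (the HMR integral representation, the substitution $1-u=\Phi(y)$, and the factor-of-$2^{d-1}$ rewriting), merely making explicit the final bookkeeping step $\binom{n}{d}\,\bB(n-d+1,d)=1/d$ that the paper leaves implicit. No issues.
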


In Lemma \ref{le:phi-Phi-inv} below we show that
$$
 g_d(u) 
=
(\ln u^{-1})^{-  \frac {d-1}2 }
e^{- \frac {d-1}4 \frac{\lln u^{-1} }{ \ln u^{-1}} 
- (d-1) \frac{O(1)}{\ln  u^{-1}} }
$$
as $u \to 0$.
Because the Beta function is concentrated around $\frac dn$, see Lemma \ref{le:betalow-} and Lemma \ref{le:betalarge}, this yields
$$ \E g_d(U) \approx 
\left( \ln \frac nd\right)^{\frac {d-1}2} e^{ - \frac{d-1}4 \frac{\lln \frac nd }{ \ln \frac nd} 
- (d-1) \frac{O(1)}{ \ln \frac nd} }
$$
which implies our main result.

\section{Asymptotics of the $\Phi$-function}
\label{sec:phi}

To estimate $\Phi(z)$, we need a version of
Gordon's inequality \cite{Gor41} for the Mill's ratio:

\begin{lemma}
\label{le:Phi-asymp}
For any $z>1$ there exists $\theta\in(0,1)$, such that
$$
\Phi(z)=
1-\frac{e^{-z^2}}{2\sqrt{\pi}z}
\left(1 - \frac{\theta}{2z^2}\right)
$$
\end{lemma}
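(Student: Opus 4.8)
The plan is to reduce the claim to the classical integration-by-parts expansion of the Gaussian tail, and to observe that truncating that expansion after the first correction term leaves a remainder which is automatically sandwiched between $0$ and the next term in the expansion; this sandwich is exactly what the stated form with $\theta\in(0,1)$ encodes. Since $\Phi(z)=1-\frac1{\sqrt\pi}\int_z^\infty e^{-s^2}\,ds$, writing $I(z)=\int_z^\infty e^{-s^2}\,ds$ it suffices to show that $I(z)=\frac{e^{-z^2}}{2z}-R(z)$ with $0<R(z)<\frac{e^{-z^2}}{4z^3}$, for then $\theta:=4z^3e^{z^2}R(z)\in(0,1)$ and $I(z)=\frac{e^{-z^2}}{2z}\bigl(1-\frac{\theta}{2z^2}\bigr)$, which gives the assertion after multiplying by $\tfrac1{\sqrt\pi}$.

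First I would integrate by parts using the identity $e^{-s^2}=-\frac1{2s}\frac{d}{ds}e^{-s^2}$, pairing the antiderivative $e^{-s^2}$ with the factor $-\frac1{2s}$. The boundary term at $s=\infty$ vanishes because $e^{-s^2}/s\to0$, while the boundary term at $s=z$ contributes $\frac{e^{-z^2}}{2z}$; differentiating $-\frac1{2s}$ produces $\frac1{2s^2}$, so that
\[
I(z)=\frac{e^{-z^2}}{2z}-\int_z^\infty\frac{e^{-s^2}}{2s^2}\,ds.
\]
Thus $R(z)=\int_z^\infty\frac{e^{-s^2}}{2s^2}\,ds$, which is manifestly positive; this already delivers the lower bound $R(z)>0$. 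For the upper bound I would integrate $R(z)$ by parts once more, again peeling off $e^{-s^2}=-\frac1{2s}\frac{d}{ds}e^{-s^2}$, now against the factor $\frac1{2s^2}$, i.e. pairing $e^{-s^2}$ with $-\frac1{4s^3}$. As before the contribution at infinity vanishes and the contribution at $z$ equals $\frac{e^{-z^2}}{4z^3}$, while differentiating $-\frac1{4s^3}$ yields the positive integrand $\frac{3}{4s^4}e^{-s^2}$, so that
\[
R(z)=\frac{e^{-z^2}}{4z^3}-\int_z^\infty\frac{3e^{-s^2}}{4s^4}\,ds<\frac{e^{-z^2}}{4z^3}.
\]
Combining the two bounds gives $0<R(z)<\frac{e^{-z^2}}{4z^3}$, and substituting the resulting $\theta\in(0,1)$ back into the expression for $I(z)$ completes the argument.

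There is no serious obstacle here: the entire content is the alternating-sign structure of the repeated integration by parts of Mill's ratio, which forces the truncated remainder to lie between two consecutive terms of the expansion. The only points requiring a little care are the vanishing of every boundary term at $s=\infty$ and the convergence of each tail integral, both of which hold for any $z>0$; in particular the hypothesis $z>1$ is more than sufficient and plays no essential role beyond fixing the intended range of application.
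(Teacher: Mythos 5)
Your argument is correct and follows essentially the same route as the paper: a single integration by parts to extract the main term $\frac{e^{-z^2}}{2z}$, followed by bounding the remainder $\int_z^\infty \frac{e^{-s^2}}{2s^2}\,ds$ between $0$ and $\frac{e^{-z^2}}{4z^3}$. The only difference is that you justify the upper bound on the remainder by an explicit second integration by parts, a step the paper leaves implicit.
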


\begin{proof}
It follows by partial integration that
$$
\int\limits_z^{\infty}e^{-t^2}\,dt=
\int\limits_z^{\infty}2t e^{-t^2}\, \frac {1}{2t} \,dt=
 \frac {e^{-z^2}}{2z}  - 
\int\limits_z^{\infty} \frac {e^{-t^2}}{2t^2} \,dt=
\frac {e^{-z^2}}{2z}  - 
\frac {\theta e^{-z^2}}{4z^3} 
$$
which yields the lemma. 
\end{proof}

\begin{lemma}
For any $u \in (0,e^{-1}]$ there is a $\d=\d(u) \in (0,16)$ such that 
\begin{equation}\label{eq:Phiinv-asymp}
  \Phi^{-1}(1-u) = 
\sqrt{
\ln u^{-1} - \frac 12 \lln u^{-1} - \ln (2\sqrt \pi) + \frac 14 \frac{\lln u^{-1} }{ \ln u^{-1}} 
+ \frac{\delta}{\ln  u^{-1}} 
}   .
\end{equation}
\end{lemma}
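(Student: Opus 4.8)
The plan is to set $z:=\Phi^{-1}(1-u)$ and $w:=z^2$, and to read off the expansion \eqref{eq:Phiinv-asymp} by solving, through bootstrapping, the implicit equation that Lemma~\ref{le:Phi-asymp} furnishes for $w$. Assume first that $u$ is small enough that $z>1$, so that Lemma~\ref{le:Phi-asymp} is applicable. Then the defining relation $1-\Phi(z)=u$ reads
\begin{equation*}
u=\frac{e^{-w}}{2\sqrt\pi\,\sqrt w}\left(1-\frac{\theta}{2w}\right),\qquad \theta\in(0,1),
\end{equation*}
and taking $-\ln$ of both sides gives, with the abbreviation $L:=\ln u^{-1}$,
\begin{equation*}
w=L-\tfrac12\ln w-\ln(2\sqrt\pi)+\ln\!\left(1-\frac{\theta}{2w}\right).\tag{$\ast$}
\end{equation*}
The whole problem is now to solve $(\ast)$ for $w$ to the required precision.

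Next I would bootstrap. Since $\tfrac12\ln w>0$ while the remaining two terms of $(\ast)$ are negative, $(\ast)$ gives immediately $w<L$, whence $\ln w<\ln L=\lln u^{-1}$; feeding this crude estimate back into $(\ast)$ produces a matching lower bound and hence $w=L\,(1+o(1))$, in particular $\ln w=\lln u^{-1}+o(1)$. Inserting this into $(\ast)$ once more sharpens $w$ to $w=L-\tfrac12\lln u^{-1}-\ln(2\sqrt\pi)+o(1)$, and therefore
\begin{equation*}
\ln w=\lln u^{-1}-\frac{\lln u^{-1}}{2\ln u^{-1}}-\frac{\ln(2\sqrt\pi)}{\ln u^{-1}}+O\!\left(\frac{(\lln u^{-1})^2}{(\ln u^{-1})^2}\right),
\end{equation*}
while $\ln(1-\theta/(2w))=-\tfrac{\theta}{2\ln u^{-1}}+O\!\left((\lln u^{-1})(\ln u^{-1})^{-2}\right)$. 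Substituting both back into $(\ast)$ and collecting terms yields
\begin{equation*}
w=\ln u^{-1}-\tfrac12\lln u^{-1}-\ln(2\sqrt\pi)+\frac14\,\frac{\lln u^{-1}}{\ln u^{-1}}+\frac{1}{\ln u^{-1}}\Bigl(\tfrac12\ln(2\sqrt\pi)-\tfrac12\theta\Bigr)+O\!\left(\frac{(\lln u^{-1})^2}{(\ln u^{-1})^2}\right),
\end{equation*}
which is exactly the shape of \eqref{eq:Phiinv-asymp} with $\delta=\tfrac12\ln(2\sqrt\pi)-\tfrac12\theta+O\!\left((\lln u^{-1})^2/\ln u^{-1}\right)$.

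It then remains to pin $\delta$ into the interval $(0,16)$ for every $u\in(0,e^{-1}]$. The explicit leading part $\tfrac12\ln(2\sqrt\pi)-\tfrac12\theta$ lies in $\bigl(\tfrac12\ln(2\sqrt\pi)-\tfrac12,\ \tfrac12\ln(2\sqrt\pi)\bigr)\subset(0,1)$ because $\theta\in(0,1)$, so the task is to convert the $O$-terms of the bootstrap into explicit bounds and check that they keep $\delta$ strictly inside $(0,16)$. For $\ln u^{-1}$ large this is clear, and the generous width of $(0,16)$ is forced by the moderate range of $u$, where $z\le1$ and Lemma~\ref{le:Phi-asymp} does not apply. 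On that compact range I would argue directly instead: $u\mapsto\Phi^{-1}(1-u)$ is continuous and strictly decreasing, so it suffices to sandwich $w=\Phi^{-1}(1-u)^2$ between the two functions obtained from the right-hand side of \eqref{eq:Phiinv-asymp} at $\delta=0$ and at $\delta=16$, which reduces to finitely many elementary estimates on $\Phi$.

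The main obstacle is precisely this quantitative step: replacing the asymptotic $O$-notation by explicit numerical bounds that are sharp enough to keep $\delta$ inside $(0,16)$ \emph{uniformly} in $u$, and in particular covering the non-asymptotic regime $u\to e^{-1}$ where the Mill's-ratio expansion of Lemma~\ref{le:Phi-asymp} is unavailable. The bootstrapping of $(\ast)$ itself is routine; all the care goes into the bookkeeping of the lower-order terms and the patching of the bounded range.
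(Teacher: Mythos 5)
Your setup is the same as the paper's: both proofs start from Lemma \ref{le:Phi-asymp}, take logarithms to obtain the implicit equation $z^2=\ln u^{-1}-\tfrac12\ln(z^2)-\ln(2\sqrt\pi)+\ln(1-\theta/(2z^2))$, and extract the expansion from it (the paper does this by plugging in the ansatz \eqref{eq:def-z} and verifying consistency rather than by bootstrapping, but that is a cosmetic difference). Your asymptotic bookkeeping is correct, including the leading value $\delta\approx\tfrac12\ln(2\sqrt\pi)-\tfrac12\theta\in(0,1)$.

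The genuine gap is the part you yourself flag as ``the main obstacle'': the lemma is not an asymptotic statement but a uniform, quantitative one --- $\delta(u)\in(0,16)$ for \emph{every} $u\in(0,e^{-1}]$ --- and your argument stops exactly where that claim begins. Your error term $O\bigl((\lln u^{-1})^2/\ln u^{-1}\bigr)$ is not even small at the low end of the range (at $t=\ln u^{-1}$ of order $e$ it is of order $1$ with an unspecified constant), so ``for $\ln u^{-1}$ large this is clear'' does not cover the range the paper actually needs; and the proposed finite numerical check on the moderate range is only a plan, not a proof. The missing idea, which is where all the content of the lemma lives, is how the paper converts the implicit equation into explicit two-sided bounds: it rewrites the identity in exponential form \eqref{eq:et-Phi-eq2}, applies $e^x\geq 1+x$ in each direction to the exact (not truncated) expressions, and solves the resulting inequalities for $\delta(t)$ to obtain closed-form rational bounds such as $\delta(t)\geq\bigl(\ln(2\sqrt\pi)-\theta-\tfrac{1-2\theta\ln t}{4t}\bigr)/\bigl(2+\tfrac{1-2\theta}{t}\bigr)>0$ and a matching upper bound $\leq 16$, both valid for all $t\geq1$ with no asymptotics and no case split. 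Without some substitute for this step (or an actually executed finite verification on the moderate range, including the subrange where $z\leq1$ and Lemma \ref{le:Phi-asymp} as stated does not apply), the lemma as stated is not proved; note also that the paper's later arguments use $\delta\geq0$ and $\delta\leq16$ as hard inequalities, so the constants cannot be left implicit.
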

\begin{proof}
It is useful to prove (\ref{eq:Phiinv-asymp}) for the transformed variable $u=e^{-t}$.
We define 
\begin{equation}\label{eq:def-z}
z (t)
=
\sqrt{t - \frac 12 \ln t - \ln (2\sqrt \pi) + \frac 14 \frac{\ln t}{t} +\frac{\delta(t)}{t} } 
\end{equation}
which exists for $t > 0$.
In a first step we prove that this is the asymptotic expansion of $z=\Phi^{-1}(1-e^{-t})$ as $z,t \to \infty$ with a suitable function $\d=\d(t)=O(1)$. In a second step we show the bound on $\delta$. Observe that $z \geq 1$ implies $t \geq \ln \Phi(-1))=-2,54\dots$.
By Lemma \ref{le:Phi-asymp}, for $ z \geq 1$
\begin{equation}\label{eq:et-Phi-eq}
e^{-t}=
1- \Phi(z)
= \frac 1{2\sqrt \pi\, z} e^{- z^2} \left(1 - \frac {\theta(z)}{2 z^2}\right) 
\end{equation}
as $z \to \infty$ with some $\theta(z) \in (0,1)$, which immediately implies that 
$ z=z(t) \to \infty $ as $ t \to \infty $.
Equation (\ref{eq:et-Phi-eq}) shows that 
$e^t \geq 2 \sqrt \pi z e^{z^2}$ and thus 
$$ t \geq \ln(2 \sqrt \pi) + \ln  z(t) + z(t)^2 \geq z(t)^2 $$
for $z \geq 1$.
The function $z=z(t)$ is the inverse function we are looking for, if it satisfies 
\begin{equation}\label{eq:et-Phi-eq2}
4 \pi z(t)^2 e^{-2t} =  e^{- 2z(t)^2} \left(1 - \frac {\theta(z)}{2 z^2}\right) ^2.
\end{equation}
We plug (\ref{eq:def-z}) into this equation. This leads to
\begin{align*}
t - \frac 12 \ln t - \ln (2\sqrt \pi) + \frac 14 \frac{\ln t }{t} + \frac{\delta(t)}{ t} 
&=
t e^{ - \frac 12 \frac{\ln t}{t}  - 2\frac{\delta(t)}{t}  }
\left(1 - O(t^{-1})\right)  
\\ &=
t - \frac 12 \ln t - 2 \delta(t) - O(1) 
\end{align*}
and shows 
$ - \ln (2\sqrt \pi) + o(1)=- 2 \delta(t) - O(1) $. 
Thus the function $z(t)$ given by (\ref{eq:def-z}) in fact satisfies (\ref{eq:et-Phi-eq}) and therefore it is the asymptotic expansion of the inverse function.

The desired estimate for $\d$ follows from some more elaborate but elementary calculations. First we prove that $\d \geq 0$. By (\ref{eq:et-Phi-eq2}) and because $e^x \geq 1+x$,
\begin{align*}
t - \frac 12 \ln t - \ln (2\sqrt \pi) + \frac 14 \frac{\ln t }{t} + \frac{\delta(t)}{ t} 
&\geq 
t \left(1 - \frac 12 \frac{\ln t}{t}  - 2\frac{\delta(t)}{t}  \right)
\left(1 - \frac{\theta}{2t} \right) ^2 
\\ &\geq 
 (t - \frac 12  \ln t   - 2 \delta(t) )
\left(1 -  \frac{\theta}{t} \right) 
\end{align*}
which is equivalent to 
\begin{align*}
\delta(t) 
&\geq 
\frac{ \ln (2\sqrt \pi) 
- \theta   - \frac{1-2\theta \ln t }{4t}   }
{(2 + \frac{1-2\theta  }{t} ) }
>0
\end{align*}
for $t \geq 1$.
On the other hand, again by (\ref{eq:et-Phi-eq2}),
\begin{align*}
t 
&\geq 
\left(t - \frac 12 \ln t - \ln (2\sqrt \pi) + \frac 14 \frac{\ln t }{t} + \frac{\delta(t)}{ t} \right)
e^{\frac 12 \frac{\ln t}{t}  + 2\frac{\delta(t)}{t} }
\end{align*}
and using $e^x \geq 1+x$ implies
$$
\delta(t) 
\leq 
\frac{ \ln (2\sqrt \pi) 
+  \frac {2\ln (2\sqrt \pi) -1}4 \frac{\ln t}{t}  
+ \frac 14 \frac{(\ln t)^2}{t}   + \frac 18 \frac{(\ln t )^2}{t^2}   }
{2  - (2\ln (2\sqrt \pi)-1) \frac{1}{t}
- \frac{\ln t }{t} }
\leq 16
. $$

\end{proof}

An asymptotic expansion for $\p (\Phi^{-1} (1-u))$ follows immediately:
\begin{lemma}\label{le:phi-Phi-inv}
For any $u \in (0,e^{-1}]$ there is a $\d=\d(u) \in (0,16)$ such that 
\begin{equation*}\label{eq:Phiinv-asymp2}
g_d(u)= \left( (2u)^{-1}\p(  \Phi^{-1}(1-u) ) \right)^{d-1}= 
(\ln u^{-1})^{  \frac {d-1}2 }
e^{- \frac {d-1}4 \frac{\lln u^{-1} }{ \ln u^{-1}} 
- (d-1)\frac{\delta}{\ln  u^{-1}} }~.
\end{equation*}
\end{lemma}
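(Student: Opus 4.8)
The plan is to obtain this lemma as an immediate consequence of equation \ref{eq:Phiinv-asymp}, simply by substituting the expansion of $\Phi^{-1}(1-u)$ into the definition of $g_d$. Since $\p(y)=\frac1{\sqrt\pi}e^{-y^2}$, the only quantity that enters $\p(\Phi^{-1}(1-u))$ is the square $\bigl(\Phi^{-1}(1-u)\bigr)^2$, and that square is precisely what equation \ref{eq:Phiinv-asymp} evaluates. Consequently there is no genuine analytic obstacle here: all the real work was already done in establishing the expansion \ref{eq:Phiinv-asymp} together with the range $\d\in(0,16)$, and the present lemma is a bookkeeping exercise in tracking the five summands of the exponent.

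Concretely, I would first write
$$
\p\bigl(\Phi^{-1}(1-u)\bigr)=\frac1{\sqrt\pi}\,e^{-\left(\Phi^{-1}(1-u)\right)^2},
$$
and then replace $\bigl(\Phi^{-1}(1-u)\bigr)^2$ by the right-hand side of \ref{eq:Phiinv-asymp}. The exponent $-\bigl(\Phi^{-1}(1-u)\bigr)^2$ is the negative of that right-hand side, and exponentiating it factor by factor: the term $-\ln u^{-1}$ produces $e^{-\ln u^{-1}}=u$; the term $+\frac12\lln u^{-1}$ produces $(\ln u^{-1})^{1/2}$; the constant $+\ln(2\sqrt\pi)$ produces the numerical factor $2\sqrt\pi$, which combined with the prefactor $\frac1{\sqrt\pi}$ yields exactly $2$; and the two remaining terms, $-\frac14\frac{\lln u^{-1}}{\ln u^{-1}}$ and $-\frac{\d}{\ln u^{-1}}$, stay inside the exponential.

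Collecting these contributions gives
$$
\p\bigl(\Phi^{-1}(1-u)\bigr)=2u\,(\ln u^{-1})^{1/2}\,e^{-\frac14\frac{\lln u^{-1}}{\ln u^{-1}}-\frac{\d}{\ln u^{-1}}},
$$
so that the factor $2u$ is cancelled exactly by the $(2u)^{-1}$ in the definition of $g_d$, leaving $(\ln u^{-1})^{1/2}e^{-\frac14\frac{\lln u^{-1}}{\ln u^{-1}}-\frac{\d}{\ln u^{-1}}}$. Raising this to the power $d-1$ reproduces the asserted formula, with $(\ln u^{-1})^{\frac{d-1}2}$ as the polynomial factor and the exponent scaled by $d-1$. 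Finally, the hypothesis $u\in(0,e^{-1}]$ and the bound $\d=\d(u)\in(0,16)$ are inherited verbatim, since $\d$ is literally the same quantity appearing in \ref{eq:Phiinv-asymp}; no further estimation is needed, and the identity holds exactly on the whole interval rather than merely asymptotically.
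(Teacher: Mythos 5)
Your proposal is correct and is exactly the paper's route: the paper derives this lemma as an immediate consequence of the expansion \eqref{eq:Phiinv-asymp}, and your computation (exponentiating the five summands, with $e^{-\ln u^{-1}}=u$, $e^{\frac12\lln u^{-1}}=(\ln u^{-1})^{1/2}$, and $e^{\ln(2\sqrt\pi)}/\sqrt\pi=2$ cancelling against $(2u)^{-1}$) is precisely the bookkeeping the paper leaves implicit. The range of $\d$ is indeed inherited verbatim, so nothing further is needed.
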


\section{Concentration of the $\beta$-distribution}
\label{sec:beta}

A basic integral for us is the Beta-integral
\begin{equation}
\label{beta}
\bB(\a,\b) =
\int\limits_{0}^{1}(1- x)^{\a-1} x^{\b -1}\,dx
=\frac{(\a-1)!(\b-1)!}{(\a+\b-1)!}.
\end{equation}
Let $U \sim \bB(\a, \b)$ distributed. Then
$\E U=\frac \b{\a+\b}$ and $\var(U)= \frac{\a \b}{(\a+\b)^2 (\a +\b+1)}$ 
Next we establish concentration inequalities for a Beta-distributed random variable 
around its mean. Observe that if $U \sim \bf B(\a,\b)$, then $1-U \sim \bf B (\b, \a)$. Hence we may concentrate on the case 
$\a\ge \b$.

\begin{lemma}
\label{le:betalow-}
Let $U \sim \bB(a+1, b+1)$ distributed with $a \geq b$ and set $n=a+b$. Then
$$
\P\left(U \leq  \frac bn - s \frac{a^{\frac 12} b^{\frac 12}}{n^{\frac 32}}\right) 
\leq
\frac {3e^3}{ \pi }
\frac 1{s}  \left( e^{- \frac 1{6} s^2  } 
- e^{- \frac 1{6} \frac {nb}a  } \right)_+
. $$

\end{lemma}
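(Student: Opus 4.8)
My plan is to estimate the lower tail directly from the Beta density and reduce it to a truncated Gaussian tail by a change of variables centred at the mode. Writing the density of $U\sim\bB(a+1,b+1)$ as $\bB(a+1,b+1)^{-1}e^{h(x)}$ with $h(x)=b\ln x+a\ln(1-x)$, the exponent $h$ is concave and attains its maximum at the mode $x^\ast=b/n$, which is exactly the centring point of the claimed inequality; moreover $|h''(x^\ast)|=n^3/(ab)=\sigma^{-2}$ with $\sigma=a^{1/2}b^{1/2}n^{-3/2}$, so $\sigma$ is the natural standard-deviation scale. Setting $\psi(x)=h(x^\ast)-h(x)\ge 0$ and substituting $x=x^\ast-r\sigma$ (so that $x=t:=\tfrac bn-s\sigma$ corresponds to $r=s$ and $x=0$ to $r=\sqrt{nb/a}$) gives
$$
\P(U\le t)=\frac{e^{h(x^\ast)}\sigma}{\bB(a+1,b+1)}\int_s^{\sqrt{nb/a}}e^{-\psi(x^\ast-r\sigma)}\,dr .
$$

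The heart of the matter is a \emph{global} quadratic lower bound for $\psi$ on the whole interval $(0,x^\ast)$, not merely near the mode. To establish it I would write $x=\tfrac bn(1-y)$ with $y\in(0,1)$ and $\mu=b/a\in(0,1]$, so that $\psi=b\bigl[-\ln(1-y)-\mu^{-1}\ln(1+\mu y)\bigr]$. Expanding both logarithms in power series, the linear terms cancel (reflecting that $x^\ast$ is the mode) and the coefficient of $y^k$ equals $\tfrac1k\bigl(1-(-1)^{k-1}\mu^{k-1}\bigr)$, which is nonnegative for every $k\ge 2$ because $\mu\le 1$. Retaining only the $k=2$ term yields $\psi\ge \tfrac{1+\mu}{2}\,b\,y^2=(x^\ast-x)^2/(2\sigma^2)$, that is $\psi(x^\ast-r\sigma)\ge r^2/2\ge r^2/6$. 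I would then deliberately discard most of this margin and keep only the constant $1/6$, since it is precisely this constant that produces the prefactor $3=\tfrac1{2\cdot1/6}$ in the next step.

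With $\psi(x^\ast-r\sigma)\ge r^2/6$ in hand, the integral is dominated by a truncated Gaussian tail, bounded by the elementary monotonicity trick
$$
\int_s^{\sqrt{nb/a}}e^{-r^2/6}\,dr\le \int_s^{\sqrt{nb/a}}\frac rs\,e^{-r^2/6}\,dr=\frac 3s\Bigl(e^{-\frac16 s^2}-e^{-\frac16\frac{nb}a}\Bigr),
$$
the finite upper limit $\sqrt{nb/a}$ (the image of $x=0$) being exactly what turns the usual single exponential into the stated difference. When $s\ge\sqrt{nb/a}$ the threshold $t$ is nonpositive, the event $\{U\le t\}$ is empty, and the right-hand side is automatically $\le 0$; this accounts for the $(\cdot)_+$.

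It remains to bound the constant prefactor $e^{h(x^\ast)}\sigma/\bB(a+1,b+1)$. Here I would use $e^{h(x^\ast)}=a^ab^b/n^n$ and the exact value $\bB(a+1,b+1)=a!\,b!/(n+1)!$ together with Stirling's formula, in the form $k!\ge\sqrt{2\pi k}\,(k/e)^k$ and a matching upper bound for $(n+1)!$. A short computation collapses the factorials and the powers, leaving a bound of the shape $\tfrac{e}{2\pi}\cdot\tfrac{n+1}{n}$ up to the Stirling correction factors; keeping explicit track of the constants $2\pi$ and $e$ is what produces the $e^3/\pi$, and the resulting prefactor is comfortably below $e^3/\pi$ for all $n\ge1$. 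I expect the only genuinely delicate point to be the global quadratic bound on $\psi$: near the mode it is routine Laplace asymptotics, but one must check that the clean constant survives uniformly in $\mu\in(0,1]$ and all the way down to $x=0$, and it is the nonnegativity of \emph{every} power-series coefficient that makes this work.
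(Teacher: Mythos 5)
Your proposal is correct and follows essentially the same route as the paper: the same substitution $x=\tfrac bn-r\sigma$ turning the integral into a truncated Gaussian tail over $[s,\sqrt{nb/a}]$, the same monotonicity trick $\int_s^R e^{-r^2/6}\,dr\le\tfrac 3s(e^{-s^2/6}-e^{-R^2/6})$, and the same Stirling estimate $a^{a+1/2}b^{b+1/2}n^{-n-3/2}\le\tfrac{e^3}{\pi}\bB(a+1,b+1)$ for the prefactor. The only (harmless) deviation is in bounding the integrand: the paper applies $\ln(1+x)\le x-\tfrac{x^2}{6}$ to each factor to get $e^{-r^2/6}$, whereas your power-series argument with all nonnegative coefficients actually yields the stronger $e^{-r^2/2}$ before you deliberately weaken it to match the stated constant.
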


\begin{proof}
We have to estimate the integral
$$
\frac 1{\bB(a+1, b+1)} 
\int\limits_{0}^{\frac{b-s \sqrt{\frac{ab}n}}{n}}   (1- x)^a x^b \,dx
$$
For an estimate from above  
we substitute $x= \frac bn - \frac yn \sqrt{\frac{ab}n}$.
\begin{eqnarray*}
J_-
&=&
\int\limits_{0}^{\frac{b-s \sqrt{\frac{ab}n}}{n}}(1- x)^a x^b\,dx
\\ &=&
\frac { a^{a+ \frac 12} b^{b+ \frac 12} }{n^{n+\frac 32}} \  
\int\limits_{s }^{\sqrt{\frac {nb}{a}}}
\left(1+ y \sqrt{\frac{b}{an}}\right)^a 
\left(1 - y  \sqrt{\frac{a}{bn}}\right)^b\,dy
\end{eqnarray*}
It is well known that 
\begin{equation}\label{eq:est-ln}
\ln(1+x) = \sum_{k=1}^\infty (-1)^{k-1} \frac{x^k}k 
\leq x - \frac{x^2}6 ,
\end{equation}
for $x \in (-1,1]$.
Since $a \geq b$, we have 
$$
\left(1+ y \sqrt{\frac{b}{an}}\right)^a 
\left(1 - y  \sqrt{\frac{a}{bn}}\right)^b
\leq
e^{- \frac {1}{6} y^2  }~,
$$
which implies
\begin{eqnarray*}
J_-
&\leq &
\frac { a^{a+ \frac 12} b^{b+ \frac 12} }{n^{n+\frac 32}} \  
\int\limits_{s }^{\sqrt{\frac {nb}{a}}}
e^{- \frac {1}{6} y^2  }
\,dy 
\\ &\leq &
\frac {3 a^{a+ \frac 12} b^{b+ \frac 12} }{n^{n+\frac 32}} \  
\frac 1{s}  \left( e^{- \frac 1{6} s^2  } 
- e^{- \frac 1{6} \frac {nb}a  } \right)~.  
\end{eqnarray*}
In the last step we use Stirling's formula,
$$
\sqrt{2\pi}\, n^{n+\frac 12} e^{-n}
\leq n! \leq 
e \, n^{n+\frac 12} e^{-n},
$$
to see that 
\begin{equation}\label{eq:stirling}
\frac { a^{a+ \frac 12} b^{b+ \frac 12} }{n^{n+\frac 32}} \  
\leq
\frac {e^3}{\pi }
\bB(a+1,b+1)
. 
\end{equation}
\end{proof}

\begin{lemma}
\label{le:betalarge}
Let $U \sim \bB(a+1, b+1)$ distributed with $a \geq b$  and set $n=a+b$. Then
for $\l\geq 2$,
$$
\P(U \geq  \l \frac bn) 
\leq
\frac{e^3}{\pi} 
\l^b b^{\frac 12 } 
e^{b  + \frac 32}  
   e^{- \l \frac {ab}n} 
. 
 $$
\end{lemma}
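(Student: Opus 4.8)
The plan is to estimate the tail probability directly from its integral representation and to exploit the log-concavity of the Beta density. Writing $x_0 := \lambda\,\tfrac bn$, we have
$$
\P\left(U \ge \lambda\,\tfrac bn\right) = \frac{1}{\bB(a+1,b+1)}\int\limits_{x_0}^{1}(1-x)^a x^b\,dx .
$$
We may assume $b\ge 1$ (otherwise the right-hand side of the lemma vanishes while the statement is vacuous) and $x_0<1$ (otherwise the probability is $0$ and there is nothing to prove). Set $h(x)=a\ln(1-x)+b\ln x$ for the logarithm of the integrand. Then $h'(x)=-\frac{a}{1-x}+\frac bx$ and $h''(x)=-\frac{a}{(1-x)^2}-\frac{b}{x^2}<0$, so $h$ is concave on $(0,1)$ with unique maximum at the mode $x=b/n$. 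Since $\lambda\ge 2>1$, the point $x_0$ lies to the right of the mode, hence $h'(x_0)<0$ and the integrand decreases on $[x_0,1]$.

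The key step is to replace the integrand by the exponential of its tangent line at $x_0$. Concavity gives $h(x)\le h(x_0)+h'(x_0)(x-x_0)$ for every $x\in(0,1)$, and extending the upper limit to $\infty$ can only enlarge the integral, so
$$
\int\limits_{x_0}^{1}e^{h(x)}\,dx \le e^{h(x_0)}\int\limits_{x_0}^{\infty}e^{h'(x_0)(x-x_0)}\,dx = \frac{(1-x_0)^a x_0^b}{-h'(x_0)} .
$$
A short computation with $x_0=\lambda b/n$ and $a+b=n$ produces the clean identity
$$
-h'(x_0)=\frac{a}{1-x_0}-\frac{n}{\lambda}=\frac{n^2(\lambda-1)}{\lambda(n-\lambda b)}\ge \frac{n(\lambda-1)}{\lambda}\ge \frac n2 ,
$$
where the first inequality uses $n-\lambda b\le n$ and the last uses $\lambda\ge 2$. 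This factor $1/(-h'(x_0))\le 2/n$ captures the genuine exponential decay of the tail; the naive bound (value at $x_0$ times the length of $[x_0,1]$) is lossy by a factor of order $n$ and would not give the stated estimate.

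It remains to assemble the pieces. Bounding $(1-x_0)^a\le e^{-ax_0}=e^{-\lambda ab/n}$ and writing $x_0^b=\lambda^b(b/n)^b$, then invoking the Stirling estimate (\ref{eq:stirling}) in the form $\bB(a+1,b+1)^{-1}\le \frac{e^3}{\pi}\,\frac{n^{n+3/2}}{a^{a+1/2}b^{b+1/2}}$, one collects the powers of $n$, $a$, $b$ via $n^{n+3/2}/n^b=n^{a+3/2}$ and $b^b/b^{b+1/2}=b^{-1/2}$, and uses the elementary bounds $(n/a)^a=(1+b/a)^a\le e^{b}$ and $(n/a)^{1/2}\le\sqrt2$ (the latter since $a\ge n/2$). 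This yields
$$
\P\left(U \ge \lambda\,\tfrac bn\right)\le \frac{e^3}{\pi}\,2\sqrt2\,e^{b}b^{-1/2}\lambda^{b}e^{-\lambda ab/n},
$$
and the claimed bound follows because $2\sqrt2\le e^{3/2}$ and $b^{-1/2}\le b^{1/2}$ for $b\ge 1$. The main obstacle is exactly this final bookkeeping: the crude tail estimate must be sharpened through the tangent-line argument so that the constant $e^{3/2}$ and the factor $b^{1/2}$ emerge as stated, rather than with a spurious factor of $n$.
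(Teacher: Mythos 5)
Your proof is correct, but it takes a genuinely different route from the paper's. The paper substitutes $x \to \frac{1}{a}y + \lambda\frac{b}{n}$, bounds $(1-x)^a \le e^{-ax}$, expands $(y+\lambda\frac{ab}{n})^b$ by the binomial theorem into Gamma integrals, and controls the resulting sum by $b$ times its largest term (using $b \le \lambda\frac{ab}{n}$); you instead exploit the log-concavity of the Beta density and majorize $h(x)=a\ln(1-x)+b\ln x$ by its tangent at $x_0=\lambda\frac{b}{n}$, which converts the tail integral into a one-line exponential integral with the clean slope identity $-h'(x_0)=\frac{n^2(\lambda-1)}{\lambda(n-\lambda b)}\ge\frac{n}{2}$. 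Both arguments then finish identically via the Stirling estimate (\ref{eq:stirling}) and $(1+b/a)^a\le e^b$. Your tangent-line argument is arguably cleaner (no binomial expansion) and in fact yields the slightly sharper prefactor $b^{-1/2}$ in place of the paper's $b^{1/2}$, which you then relax to match the stated bound; the paper's substitution argument is more hands-on but requires the monotonicity observation about $\frac{1}{k!}(\lambda\frac{ab}{n})^k$. One small quibble: for $b=0$ the stated inequality is not vacuous but actually false (left side $1$, right side $0$), so your reduction to $b\ge 1$ is mis-justified as written — though this is really a defect of the lemma's formulation, and in the paper's application $b=d-1\ge 1$ always holds.
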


\begin{proof}
We assume that $a \geq b$ and thus $a \geq \frac n2$.
We have to estimate the probability
\begin{eqnarray*}
\P(U \geq \l \frac bn) 
& \leq &
\frac 1{\bB(a+1, b+1)} 
\int\limits_{\l \frac bn}^{1}(1- x)^a x^b \,dx
\end{eqnarray*}
We substitute $x \to \frac 1a x + \l \frac bn$ and obtain
\begin{eqnarray*}
\int\limits_{\l \frac bn}^{1}(1- x)^a x^b \,dx
&\leq&
\int\limits_{0}^{\infty}e^{-x - \l \frac {ab}n} (\frac 1a x + \l 
\frac {b}n))^b \ \frac 1a \,dx
\\ &\leq&
a^{-(b+1)} e^{- \l \frac {ab}n} 
\int\limits_{0}^{\infty}e^{-x } (x + \l 
\frac {ab}n))^b \,dx.
\end{eqnarray*}
The use of the binomial formula and the Gamma functions yields
\begin{eqnarray*}
\int\limits_{0}^{\infty}e^{-x } (x + \l 
\frac {ab}n))^b \,dx
&=&
\sum_{k=0}^b {b \choose k}
\int\limits_{0}^{\infty}e^{-x } x^{b-k}  (\l 
\frac {ab}n)^k \,dx
\\ &=&
\sum_{k=0}^b {b \choose k}
(b-k)!  (\l 
\frac {ab}n)^k 
\\ &\leq&
b (\l  \frac {ab}n)^b
\end{eqnarray*}
because $ b \leq \l \frac{ab} n$ for $a \geq \frac n2 \geq b$ and $\l \geq 2$,  and 
$\frac {1}{k!} (\l  \frac {ab}n)^k 
$
is increasing for $ k\leq  (\l  \frac {ab}n) $.
Using (\ref{eq:stirling}) this gives 
\begin{eqnarray*}
\P(U \geq \l \frac bn) 
& \leq &
\frac {e^3}{\pi }
\left( 1+ \frac {b}{a} \right)^{a+ \frac 32}  
 b^{\frac 12 } \l^b  e^{- \l \frac {ab}n} 
\end{eqnarray*}
and with $(1+x) \leq e^x$ the lemma.
\end{proof}

\section{The case $n-d $ large}
\label{sec:onedim}

In this section we combine Lemma \ref{le:phi-Phi-inv} which gives the asymptotic behavior of
$  g_d(u) $ as $u \to 0$, with the concentration properties of the Beta function just obtained. We 
split our proof in two Lemmata.

\begin{lemma}\label{le:int-upper-bound}
For $d \geq d_0=78$  and $n \geq e^e d$ we have
$$
\E g_d(U) 
\leq
e^{  \frac {d-1}2 \lln (\frac nd ) 
- \frac{d-1}4 \frac{\lln (\frac {n}{d}) }{ \ln (\frac {n}{d})} 
+ (d-1) \frac{2}{ \ln (\frac nd)}  } 
e^{\frac {e^6}{\pi} \sqrt d e^{-\frac 1{10} d} } .
$$
\end{lemma}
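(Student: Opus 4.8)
The plan is to evaluate $\E g_d(U)=\int_0^1 g_d(u)f_U(u)\,du$ (with $f_U$ the density of $U\sim\bB(n-d+1,d)$, whose mean is $\approx d/n$) by splitting the range at a point $u_0$ placed a \emph{constant factor} below the mean, $u_0=c_0\frac dn$ with $c_0\approx 0.225$; in the parametrisation of Lemma \ref{le:betalow-} (with $a=n-d$, $b=d-1$) this is $u_0=\frac{b}{a+b}-s_0\frac{\sqrt{ab}}{(a+b)^{3/2}}$ for a suitable $s_0$ of order $\sqrt d$, calibrated so that the Gaussian rate $e^{-s_0^2/6}$ matches the target $e^{-d/10}$. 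On $[u_0,1]$ I use that $g_d$ is monotone, so the integral is at most $g_d(u_0)$, and I bound $g_d(u_0)$ by the stated main term; on $[0,u_0]$ I control the genuinely small lower-tail contribution.

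For the right part, $g_d$ is decreasing on $(0,e^{-1}]$ because, by Lemma \ref{le:phi-Phi-inv}, $\ln g_d(u)$ is led by the decreasing term $\tfrac{d-1}2\lln u^{-1}$ while the other two terms are $\le 0$ or only reinforce the decrease; on $[e^{-1},1)$ one has $g_d(u)=\bigl((2u)^{-1}\p(\Phi^{-1}(1-u))\bigr)^{d-1}\le (e/(2\sqrt\pi))^{d-1}<1<g_d(u_0)$ directly from $\p\le\p(0)$ and $2u\ge 2/e$. Hence $\int_{u_0}^1 g_d\,f_U\le g_d(u_0)$. Writing $\ln u_0^{-1}=\ln\frac nd+\ln\frac1{c_0}$ and Taylor-expanding $\lln u_0^{-1}$ and $\tfrac{\lln u_0^{-1}}{\ln u_0^{-1}}$ around $\ln\frac nd$, Lemma \ref{le:phi-Phi-inv} (discarding its nonnegative $\delta$-term) yields $g_d(u_0)\le e^{\frac{d-1}2\lln\frac nd-\frac{d-1}4\frac{\lln\frac nd}{\ln\frac nd}+\frac{d-1}2\frac{\ln(1/c_0)}{\ln\frac nd}+O\bigl((d-1)\frac{\lln\frac nd}{(\ln\frac nd)^2}\bigr)}$; since $\tfrac12\ln(1/c_0)\approx 0.75<2$, the whole correction fits inside the $+2$ budget in the exponent (this is where $d\ge 78$ and $n\ge e^e d$ enter, to keep the remainder terms small and $\ln\frac nd\ge e$).

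The main work, and the genuine obstacle, is the lower tail $\int_0^{u_0}g_d\,f_U$, where $g_d(u)\sim(\ln u^{-1})^{(d-1)/2}\to\infty$ as $u\to 0$. A naive estimate via the concentration bound of Lemma \ref{le:betalow-} fails here: that bound is sharp for $\P(U\le u_0)$ at the threshold but, deep in the tail, it flattens to the constant $\approx e^{-d/6}$ and so does not see the true, much faster density decay, against which the growth of $g_d$ would win. Instead I undo the substitution $1-\Phi(y)=u$ that produced $g_d$: since $g_d(u)\,u^{d-1}=2^{-(d-1)}\p(\Phi^{-1}(1-u))^{d-1}$, the lower tail equals the Gaussian-integral tail $\frac{2^{-(d-1)}}{\bB}\int_{y_0}^\infty \Phi(y)^{n-d}\p(y)^d\,dy$ with $y_0=\Phi^{-1}(1-u_0)$. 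Bounding $\Phi^{n-d}\le 1$ leaves $\int_{y_0}^\infty e^{-dy^2}\,dy\le \frac{e^{-dy_0^2}}{2dy_0}$ (Lemma \ref{le:Phi-asymp}); crucially, the factor $e^{-dy^2}$ is decreasing, so the integral localises at $y_0$ and \emph{does not} blow up. Using the Stirling bound (\ref{eq:stirling}) for $\bB^{-1}$ together with $\p(y_0)^{d-1}=g_d(u_0)(2u_0)^{d-1}$, this reorganises into $g_d(u_0)$ times a factor of order $(c_0 e)^d/\sqrt d$; because $c_0<e^{-1}$ the base $c_0e<1$, so the lower tail is $\le g_d(u_0)\,\frac{e^6}\pi\sqrt d\,e^{-d/10}$ with room to spare.

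Finally I combine: $\E g_d(U)\le g_d(u_0)\bigl(1+\tfrac{e^6}\pi\sqrt d\,e^{-d/10}\bigr)\le g_d(u_0)\,e^{\frac{e^6}\pi\sqrt d\,e^{-d/10}}$ by $1+x\le e^x$, and the bound on $g_d(u_0)$ from the second paragraph gives the claim. The decisive point is the dual constraint on $u_0$: it must lie \emph{below} $d/(en)$ so that $(c_0e)^d$ decays and the lower tail is negligible, yet not so far below that $g_d(u_0)$ overruns the $+2$ budget — and the two requirements, $c_0<e^{-1}$ and $\ln(1/c_0)\lesssim 4$, are comfortably compatible.
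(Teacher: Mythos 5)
Your proposal is correct and arrives at the bound by the same basic mechanism as the paper, but via a genuinely streamlined decomposition. The paper cuts $[0,1]$ into three pieces, $U\le e^{-2}\tfrac dn$, $U\in[e^{-2}\tfrac dn,2\tfrac dn]$ and $U\ge 2\tfrac dn$, and must invoke Lemma \ref{le:betalarge} (through \eqref{eq:dev-2dn}) to control the upper tail; you cut once, at $u_0=c_0\tfrac dn$ below the mean, and absorb everything to the right of $u_0$ into $\max_{[u_0,1]}g_d\cdot\P(U\ge u_0)\le\max_{[u_0,1]}g_d$, which eliminates Lemma \ref{le:betalarge} from the proof at the price of needing $(2u)^{-1}\p(\Phi^{-1}(1-u))$ to be decreasing on all of $(0,1)$ rather than only near $\tfrac dn$. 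For the lower tail both arguments localize the integral at the cut point: the paper works in the $u$-variable (monotonicity of $\p(\Phi^{-1}(1-\cdot))$, the bound $(1-x)^{n-d}\le e^{-(n-d)x}$, and Stirling \eqref{eq:stirling}), while you undo the substitution and apply the Gaussian tail estimate of Lemma \ref{le:Phi-asymp} together with the same Stirling bound — an equivalent computation; your observation that a blind application of Lemma \ref{le:betalow-} must fail because $\sup_{(0,u_0]}g_d=\infty$ is precisely why the paper, too, integrates directly there. Your conversion of $\lln u_0^{-1}$ into $\lln\tfrac nd$ plus an error of order $(d-1)/\ln\tfrac nd$ mirrors the paper's removal of the constant $e^2$, and the numerical window for $c_0$ closes as you claim. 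One soft spot to repair: the monotonicity of $g_d$ cannot be read off from Lemma \ref{le:phi-Phi-inv}, since $\delta(u)$ is only pinned to $(0,16)$ and $\lln u^{-1}/\ln u^{-1}$ is not monotone all the way up to $u=e^{-1}$; you should either prove directly that $h(u)=(2u)^{-1}\p(\Phi^{-1}(1-u))$ decreases (e.g.\ from $\tfrac{d}{du}\p(\Phi^{-1}(1-u))=2\Phi^{-1}(1-u)$ and Mills' ratio), or note that it suffices to dominate $g_d$ on $[u_0,1]$ by the $\delta$-free expression of Lemma \ref{le:phi-Phi-inv} evaluated at $u_0$, which is what your final bound uses anyway (for the same reason, the comparison $1<g_d(u_0)$ should be replaced by a comparison with that $\delta$-free majorant). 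These are fixable details of the same order of terseness as the paper's own proof.
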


\begin{lemma}\label{le:int-lower-bound}
For $d \geq d_0=78$  and $n \geq e^e d$ we have
$$
\E g_d(U) 
\geq 
e^{  \frac {d-1}2 \lln (\frac nd) 
- \frac{d-1}4 \frac {\lln \frac nd }{\ln \frac nd  }
-(d-1) \frac {34}{\ln \frac nd}    }
\  
e^{- \frac{2e^6}{\pi}  \sqrt d e^{ - \frac 1{10} d}} .
$$
\end{lemma}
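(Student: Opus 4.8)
The plan is to bound $\E g_d(U)$ from below by keeping only the mass of $U$ on an initial interval $(0,u_+]$ on which a clean, $\d$-free and monotone lower envelope of $g_d$ is available, and then to estimate separately the value of that envelope at $u_+$ and the probability $\P(U\le u_+)$. Since $g_d\ge 0$, for any $u_+\in(0,e^{-1}]$ we have $\E g_d(U)\ge \E\!\left[g_d(U)\mathbf 1_{\{U\le u_+\}}\right]$. Using Lemma~\ref{le:phi-Phi-inv} together with $\d\in(0,16)$, I bound $g_d(u)\ge \tilde g_d(u)$ on $(0,e^{-1}]$, where
$$
\tilde g_d(u)=(\ln u^{-1})^{\frac{d-1}2}\,e^{-\frac{d-1}4\frac{\lln u^{-1}}{\ln u^{-1}}-(d-1)\frac{16}{\ln u^{-1}}}~.
$$
Writing $L=\ln u^{-1}\ge 1$, a direct sign check of $\frac{d}{dL}\ln\tilde g_d=\frac{d-1}{2L}-\frac{d-1}4\frac{1-\ln L}{L^2}+\frac{16(d-1)}{L^2}$ (the last term dominating the middle one for $1\le L<e$, the middle one being nonnegative for $L\ge e$) shows $\tilde g_d$ is decreasing in $u$ on $(0,e^{-1}]$. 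Hence $\tilde g_d(u)\ge\tilde g_d(u_+)$ for $u\le u_+$, giving $\E g_d(U)\ge \tilde g_d(u_+)\,\P(U\le u_+)$.

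Next I fix the cut-off. With $a=n-d$, $b=d-1$ in the parametrization of Lemma~\ref{betalow+}, write $\mu=\frac{d-1}{n-1}$ and $\sigma=\frac{(n-d)^{1/2}(d-1)^{1/2}}{(n-1)^{3/2}}$, and take $u_+=\mu+s\sigma$ with $s=\sqrt{3d/5}$, so that $e^{-s^2/6}=e^{-d/10}$. Since $s\sigma/\mu=s\sqrt{\tfrac{n-d}{(n-1)(d-1)}}\le\sqrt{\tfrac35\cdot\tfrac{d}{d-1}}<1$ for $d\ge78$, one gets $u_+<2\mu\le 2e^{-e}<e^{-1}$, so the envelope is valid on all of $(0,u_+]$; moreover $L_+:=\ln u_+^{-1}=\ln\mu^{-1}-\ln(1+s\sigma/\mu)\ge \ln\tfrac nd-\ln2$, since $\ln\mu^{-1}=\ln\tfrac{n-1}{d-1}\ge\ln\tfrac nd$. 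The probability factor is controlled by Lemma~\ref{betalow+} and Lemma~\ref{le:betalarge} (with $\l=2$): for $d\ge78$ and $n\ge e^ed$ both tail terms are at most $\frac{e^6}{2\pi}\sqrt d\,e^{-d/10}$, whence $\P(U>u_+)\le\frac{e^6}{\pi}\sqrt d\,e^{-d/10}$, and from $1-p\ge e^{-2p}$ for $p\le\frac12$ we obtain $\ln\P(U\le u_+)\ge -\frac{2e^6}{\pi}\sqrt d\,e^{-d/10}$, which is exactly the claimed exponential correction.

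It then remains to expand $\ln\tilde g_d(u_+)$. Writing $L_+=\ln\tfrac nd-\gamma$ with $0\le\gamma\le\ln2$, I expand each of the three terms $\tfrac{d-1}2\ln L_+$, $-\tfrac{d-1}4\tfrac{\ln L_+}{L_+}$ and $-(d-1)\tfrac{16}{L_+}$ around $\ln\tfrac nd$, using $\ln(\ln\tfrac nd-\gamma)=\lln\tfrac nd-\tfrac{\gamma}{\ln\frac nd}+O\!\big((\ln\tfrac nd)^{-2}\big)$ and $\tfrac1{L_+}=\tfrac1{\ln\frac nd}+O\!\big((\ln\tfrac nd)^{-2}\big)$. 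The leading terms reproduce exactly $\tfrac{d-1}2\lln\tfrac nd-\tfrac{d-1}4\tfrac{\lln\frac nd}{\ln\frac nd}$, while every remaining contribution has the form $c\,\tfrac{d-1}{\ln\frac nd}$ with an explicit constant $c$.

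The main obstacle, and the reason the admissible constant is as large as $34$ rather than the naive $16$ coming from $\d<16$, lies entirely in this last bookkeeping step. Because $n\ge e^ed$ only forces $\ln\tfrac nd\ge e$ and $\lln\tfrac nd\ge1$, \emph{none} of the ``lower-order'' terms is genuinely small: the shift $\gamma\le\ln2$ inside the double logarithm, the discrepancy between $L_+$ and $\ln\tfrac nd$ in both the middle term and the $16$-term, and the replacement of $\ln\tfrac{n-1}{d-1}$ by $\ln\tfrac nd$, each contribute a bona fide constant to the coefficient of $\tfrac{d-1}{\ln\frac nd}$. Summing these explicit constants, all estimated using $\ln\tfrac nd\ge e$, $\lln\tfrac nd\ge1$ and $d\ge78$, keeps the total above $-34\,\tfrac{d-1}{\ln\frac nd}$; combined with the probability factor from the previous paragraph this yields the asserted bound.
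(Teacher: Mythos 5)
Your proposal is correct, and its skeleton coincides with the paper's: truncate $U$ near $d/n$, evaluate the exponent of $g_d$ at $u^{-1}=\tfrac 12\tfrac nd$ via Lemma~\ref{le:phi-Phi-inv} with the worst-case $\delta=16$, lower-bound the retained probability by $e^{-\frac{2e^6}{\pi}\sqrt d e^{-d/10}}$, and absorb the $\ln 2$ shift into the constant $34\ge \frac{3\ln 2}{2}+2\cdot 16$. The genuine difference is the truncation. The paper keeps the two-sided window $[e^{-2}\tfrac dn,2\tfrac dn]$ and bounds $\min g_d$ there, which costs it the lower-tail estimate of Lemma~\ref{le:betalow-} in addition to Lemma~\ref{le:betalarge}. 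You keep the one-sided window $(0,u_+]$ and replace the lower-tail estimate by the monotonicity of the envelope $\tilde g_d$, verified by a correct derivative computation; this also sidesteps the paper's (slightly delicate) appeal to the monotonicity of $\lln x/\ln x$ on the whole window. Two caveats. First, your choice $u_+=\mu+s\sigma$ obliges you to invoke Lemma~\ref{betalow+}, which the paper deliberately states \emph{without proof} because its own argument does not need it; taking $u_+=2\tfrac{d-1}{n-1}$ outright gives the same $L_+\ge\ln\tfrac nd-\ln 2$ and needs only the proven Lemma~\ref{le:betalarge} (with the same bound $\P(U>u_+)\le\frac{e^6}{2\pi}\sqrt d\,e^{-d/10}$), so your route can be made self-contained at no cost. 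Second, the assertion $\gamma\ge 0$ (i.e.\ $u_+\ge\tfrac dn$) is not justified as written, since $\mu=\tfrac{d-1}{n-1}<\tfrac dn$; it does hold in the stated range, but it is cleaner to note that monotonicity of $\tilde g_d$ lets you evaluate at $L=\ln\tfrac nd-\ln 2$ exactly, making the sign of $\gamma$ irrelevant. The final bookkeeping, with constants $\ln 2+\frac{\ln 2}{2e}+32<34$, checks out.
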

These two bounds prove Theorem \ref{th:facetnlarge}.
The idea is to split the expectation into the main term close to $\frac dn$ and two error terms,
\begin{eqnarray*}
 \E g_d(U) 
& = &
\E g_d(U) \, \1\left( U \leq e^{-2} \frac dn \right)
\\&&
+\E g_d(U) \, \1 \left( U \in \Big[e^{-2} \frac dn,2 \frac dn\Big]\right)
\\&&
+\E g_d(U) \, \1 \left( U \geq 2 \frac dn \right)~.
\end{eqnarray*}

\begin{proof}[Proof of Lemma \ref{le:int-lower-bound}]
Recall that $U$ is $\bB(n-d+1,d)$-distributed.
Lemma \ref{le:betalarge} with $a=n -d $ and $b= d-1 $ shows that 
$$
\P\left(U \geq  \l \frac {d}{n}\right) 
\leq
\P\left(U \geq  \l \frac {d-1}{n-1}\right) 
\leq
\frac{e^3}{\pi} 
\l^{d-1} (d-1)^{\frac 12 } 
e^{(d-1)  + \frac 32}  
   e^{- \l \frac {(n-d)(d-1)}{n-1}} 
$$ 
because $ \frac {d-1}{n-1}< \frac dn$. For $\l=2$ this gives
\begin{equation}\label{eq:dev-2dn}
\P\left(U \geq  2 \frac dn\right) 
\leq
\frac{e^6}{2\pi}  \sqrt d
e^{ ( \ln 2- 1 +  2 \frac {d}n) d} 
\leq
\frac{e^6}{2\pi}  \sqrt d
e^{ - \frac 1{10} d}
\end{equation}
for $n \geq 10 d$. The probability that $U$ is small is estimated by Lemma \ref{le:betalow-} with 
$s= (1- e^{-2})\sqrt{\frac{(d-1)(n-1)}{n-d}}$,
\begin{eqnarray*}
\P \left( U \leq  e^{-2} \frac {d-1}{n-1} \right)
&\leq&
\frac {3e^3}{ \pi }
(1- e^{-2})^{-1} \sqrt{\frac{n-d}{(d-1)(n-1)}}  e^{- \frac 1{6} (1- e^{-2})^2 \frac{(d-1)(n-1)}{n-d}  } 
\\ &\leq&
\frac {e^6}{ 2 \pi }
 e^{- \frac 1{10}d } 
\end{eqnarray*}
for $d \geq 6$.
Combining both estimates and using 
\begin{equation}\label{eq:ln>2x}
\ln (1+x) \geq + 2x 
\end{equation}
for $x \in [0, \frac 12]$,
we have
\begin{equation}\label{eq:PUin}
\P \left( U \in \Big[\frac 12 \frac dn,2 
\frac dn \Big] \right)
\geq
1
- \frac{e^6}{2\pi}  \sqrt d e^{ - \frac 1{10} d}
- \frac {e^6}{ 2 \pi }  e^{- \frac 1{10}d } 
\geq
e^{ - \frac{2e^6}{\pi}  \sqrt d e^{ - \frac 1{10} d}} 
\end{equation}
for $d \geq d_0=78$. (Observe that 
$ 
\frac{2e^6}{\pi}  \sqrt d_0 e^{ - \frac 1{10} d_0}
\leq  
\frac 12 $.)
In the last step we compute
\begin{eqnarray*}
\min_{u \in [e^{-2} \frac dn,2 \frac dn]} g_d(u)
& = &
\min_{u \in [e^{-2} \frac dn,2 \frac dn]}
e^{\frac{d-1}2 \lln u^{-1} 
- \frac{d-1}4 \frac{\ln \ln u^{-1} }{ \ln u^{-1}} - (d-1) \frac{ \delta}{ \ln u^{-1}} }
\\ &\geq&
e^{  \frac {d-1}2 \lln (\frac 12 \frac nd)  
- \frac{d-1}4 \frac{\lln (\frac 12 \frac {n}d) }{ \ln (\frac 12 \frac n{d})} 
- (d-1) \frac{\max \delta}{ \ln (\frac 12 \frac n{d})} } 
\end{eqnarray*}
for $n \geq e^e d$.
Here, note that $\frac {\lln x}{\ln x}$ is decreasing for $x \geq e^e$.
Now using
$$
\lln \left(\frac nd \right)  
\geq 
\lln \left(\frac 12 \frac nd \right)  
 =
\lln \left(\frac nd \right)  +\ln \left(1- \frac{\ln 2 }{ \ln (\frac nd)} \right)
\geq 
\lln \left(\frac nd \right) - \frac{2\ln 2 }{ \ln (\frac nd)},
$$
and 
$$
\frac 1{\ln (\frac 12 \frac nd)}  
 =
\frac 1{\ln (\frac nd)  - \ln 2 }
\leq
\frac 1{\ln (\frac nd ) }
\left( 1 + 2 \frac{\ln 2}{\ln (\frac nd) } \right)
\leq
2  \frac 1{\ln (\frac nd)  }
$$
for $n \geq e^{e} d $, we have
\begin{eqnarray*}
\min_{u \in [e^{-2} \frac dn,2 \frac dn]} g_d(u)
&\geq&
e^{  \frac {d-1}2 \lln \frac nd 
- \frac{d-1}4 \frac {\lln \frac nd }{\ln \frac nd  }
-(d-1) \frac {\delta '}{\ln \frac nd} 
} 
\end{eqnarray*}
with $\delta' =   \frac{3 \ln 2}2  + 2 \max \delta \in [0, 34]$. Combinig this estimate with 
(\ref{eq:PUin}) we obtain
\begin{eqnarray*}
\E g_d(U) 
&\geq&
\min_{u \in [e^{-2} \frac dn,2 \frac dn]} g_d(u)\   \E  \1\left( U \in \Big[e^{-2} \frac dn,2 
\frac dn\Big] \right)
\\ & \geq &
e^{  \frac {d-1}2 \lln \frac nd 
- \frac{d-1}4 \frac {\lln \frac nd }{\ln \frac nd  }
-(d-1) \frac {\delta '}{\ln \frac nd}   } \  
e^{- \frac{2e^6}{\pi}  \sqrt d e^{ - \frac 1{10} d}} 
\end{eqnarray*}
for $d \geq d_0$ and $n \geq e^e d$.
\end{proof}

\begin{proof}[Proof of Lemma \ref{le:int-upper-bound}]
As an upper bound we have
\begin{eqnarray*}
\E g_d(U) 
&\leq&
\E g_d(U)\1\left( U \leq e^{-2} \frac dn \right)
\\ && +
\max_{u \in [e^{-2} \frac dn,2 \frac dn]} g_d(u)
\   \P \left( U \in \Big[e^{-2} \frac dn,2  \frac dn \Big] \right)
\\&&
+ \underbrace{ \max_{u \in [2 \frac dn,1]} g_d(u)}_{\leq \max_{u \in [\frac dn,1]} g_d(u)}
\P \left( U \geq 2 \frac dn \right) 
\\ &\leq&
\E g_d(U)\1\left( U \leq e^{-2} \frac dn \right)
\\ && +
e^{  \frac {d-1}2 \lln (e^2 \frac {n}{d})  - \frac{d-1}4 \frac{\lln (e^2 \frac {n}{d}) }{ \ln 
(e^2 \frac {n}{d})} }
\\&&
+ e^{  \frac {d-1}2 \lln (\frac n{d})  - \frac{d-1}4 \frac{\lln (\frac {n}{d}) }{ \ln 
(\frac n{d})} }
\frac{e^6}{2\pi}  \sqrt d 
e^{ - \frac 1{10} d} 
\end{eqnarray*}
since $\delta \geq 0$, and 
where the last term follows from (\ref{eq:dev-2dn}).
For the first term we use that $\p(\Phi^{-1}(\cdot))$ is a symmetric and concave function and thus increasing on $[0, e^{-2}\frac d{ n}]$, and that $\d \geq 0$.
\begin{eqnarray*}
\E \lefteqn{g_d(U)\1 \left( U \leq e^{-2}\frac d{n} \right) } 
&&\\ &\leq &
\frac 1{\bB(n-d+1, d)}
\int\limits_0^{e^{-2}\frac d{n}} 
 e^{\frac {d-1}2 \lln x^{-1} - \frac{d-1}4 \frac{\lln x^{-1} }{ \ln x^{-1}}   }
(1-x)^{n-d} x^{d-1} dx
\\ & \leq &
\frac 1{\bB(n-d+1, d)}
e^{  \frac {d-1}2 \lln (e^2 \frac {n}{d})  - \frac{d-1}4 \frac{\lln (e^2 \frac {n}{d}) }{ \ln 
(e^2 \frac {n}{d})} }
\left(e^{-2} \frac d{n}\right)^{d-1} 
\int\limits_0^{\infty} 
e^{-(n-d)x} dx
\end{eqnarray*}
Now the remaining integration is trivial. We use Stirling's formula (\ref{eq:stirling}) to estimate the Beta-function and obtain
\begin{eqnarray*}
\E \lefteqn{g_d(U)\1 \left( U \leq e^{-2}\frac d{n} \right) } 
&&\\ &\leq &
\frac {e^3}{\pi} \frac {(n-1)^{n +\frac 12 }}{(n-d)^{n-d+\frac 32} (d-1)^{d- \frac 12}}
e^{  \frac {d-1}2 \lln (e^2 \frac {n}{d})  - \frac{d-1}4 \frac{\lln (e^2 \frac {n}{d}) }{ \ln 
(e^2 \frac {n}{d})} }
\left(e^{-2} \frac d{n}\right)^{d-1} 
\\ & \leq &
e^{  \frac {d-1}2 \lln (e^2 \frac {n}{d})  - \frac{d-1}4 \frac{\lln (e^2 \frac {n}{d}) }{ \ln 
(e^2 \frac {n}{d})} }
\frac {e^5}{\pi} 
e^{(d-1) +
\frac {(d-1)}{(n-d) }(\frac 32) +
1+ \frac {1}{(d-1)}\frac 12  
-2d} 
\\ & \leq &
e^{  \frac {d-1}2 \lln (e^2 \frac {n}{d})  - \frac{d-1}4 \frac{\lln (e^2 \frac {n}{d}) }{ \ln 
(e^2 \frac {n}{d})} }
\frac {e^5}{\pi} 
e^{-\frac 1{10} d} 
\end{eqnarray*}
e.g. for $n \geq e^e d$ and $d \geq 78$. Combining our results gives
\begin{eqnarray*}
\E g_d(U) 
&\leq&
e^{  \frac {d-1}2 \lln (e^2 \frac {n}{d})  - \frac{d-1}4 \frac{\lln (e^2 \frac {n}{d}) }{ \ln 
(e^2 \frac {n}{d})} }
\frac {e^5}{\pi} 
e^{-\frac 1{10} d} 
\\ && +
e^{  \frac {d-1}2 \lln (e^2 \frac {n}{d})  - \frac{d-1}4 \frac{\lln (e^2 \frac {n}{d}) }{ \ln 
(e^2 \frac {n}{d})} }
\\&&
+ e^{  \frac {d-1}2 \lln (\frac n{d})  - \frac{d-1}4 \frac{\lln (\frac {n}{d}) }{ \ln 
(\frac n{d})} }
\frac{e^6}{2\pi}  \sqrt d 
e^{ - \frac 1{10} d} 
\end{eqnarray*}
In a similar way as above, we get rid of the involved constant $e^2$ by using
$$
\lln \left(\frac nd \right)  
\leq 
\lln \left(e^2 \frac nd \right)  
 =
\lln \left(\frac nd \right)  +\ln \left(1 + \frac{ 2 }{ \ln (\frac nd)} \right)
\leq 
\lln \left(\frac nd \right) + \frac{2}{ \ln (\frac nd)},
$$
and 
$$
\frac 1{\ln (e^2 \frac nd)}  
 =
\frac 1{\ln (\frac nd)}
\left( 1+ \frac 2{\ln (\frac nd)} \right)^{-1}
\geq
\frac 1{\ln (\frac nd ) }
\left( 1 -  \frac{2}{\ln (\frac nd) } \right) .
$$
This yields
\begin{equation} \label{eq:Ef-upperbound}
\E g_d(U) 
\leq
e^{  \frac {d-1}2 \lln (\frac nd ) 
- \frac{d-1}4 \frac{\lln (\frac {n}{d}) }{ \ln (\frac {n}{d})} 
+ (d-1) \frac{\frac{3}2}{ \ln (\frac nd)}  } 
\left(1+ \frac {e^6}{\pi} \sqrt d e^{-\frac 1{10} d} \right)
\end{equation}

\end{proof}

\section{The case $n-d$ small}
\label{sec:small}

Finally, it remains to prove Theorem \ref{th:facetn-dsmall}.
The starting point here is again formula (\ref{eq:main-integral}), together with the substitution $y \to 
\frac y{\sqrt d}$.
\begin{eqnarray}\label{eq:int-n-dsmall}
\E f_{d-1}([X_1,\ldots,X_n])
&=& \nonumber
2 {n \choose d}
\frac{\sqrt{d}}{\sqrt \pi}
\int\limits_{- \infty}^\infty \Phi(y)^{n-d} e^{-dy^2}\,dy
\\ &=&
2 {n \choose d}
\frac{1}{\sqrt \pi}
\int\limits_{- \infty}^\infty \Phi\Big(\frac y{\sqrt d}\Big)^{n-d} e^{-y^2}\,dy
\end{eqnarray}
The Taylor expansion of $ \Phi$ at $y=0$ is given by
\begin{eqnarray*}
\Phi(y) 
&=& 
\frac 12 + \frac{1}{\sqrt \pi} y + \frac 1{\sqrt \pi}  (-\theta_1) e^{-\theta_1^2} \,  y^2
= 
\frac 12 + \frac{1}{\sqrt \pi} y (1-\theta_2 y)
\end{eqnarray*}
with some 
$ \theta_1, \theta_2 \in \R $ depending on $y$. Since $\Phi(y)$ is
above its tangent at $0$ for $y>0$ and below it for $y<0$, we have $0 \leq 1- \theta_2 y \leq 1$. 
Further, 
$$ |\theta_2| \leq \max_{\theta_1} \theta_1 e^{-\theta_1^2} = \frac 1{\sqrt{2e}} . $$
Hence an expression for $\ln \Phi$ at $y=0$ is given by
\begin{eqnarray*}
\ln \Phi(y) 
&=& 
- \ln 2 + 
\ln \left( 1 + \frac{2}{\sqrt \pi} y (1- \theta_2 y) \right) .
\end{eqnarray*}
We need again estimates for the logarithm, namely  $\ln (1+x) = x - \theta_3 x^2 < x$ with some $\theta_3 = \theta_3 (x) \geq 0$. 
In addition, there exists  $c_3 \in \R$ such that  $\theta_3 < c_3 $
if $x$ is bounded away from $-1$, for example, for  $x \geq 2 \Phi(-1)-1$.
This gives
\begin{eqnarray*}
\ln \Phi(y) 
&\leq&
- \ln 2 
+  \frac{2}{\sqrt \pi} y 
-  \frac{2}{\sqrt \pi} \theta_2 y^2 
\end{eqnarray*}
and
\begin{eqnarray*}
\ln \Phi(y) 
&=& 
- \ln 2 
+  \frac{2}{\sqrt \pi} y (1- \theta_2 y) 
- \theta_3 \frac{4}{\pi} y^2 \underbrace{(1- \theta_2 y)^2}_{\leq 1}
\\ & \geq &
- \ln 2 
+  \frac{2}{\sqrt \pi} y  
-  \frac{2}{\sqrt \pi} \theta_2 y^2 
- \theta_3 \frac{4}{\pi} y^2
\end{eqnarray*}
with $\theta_3 < c_3 $ for  $y \geq -1$.
Thus the Taylor expansion of $\ln \Phi$ at $y=0$ is given by
\begin{eqnarray*}
\ln \Phi(y) 
&=& 
- \ln 2 + 
\frac{2}{\sqrt \pi} y - \theta_4 y^2
\end{eqnarray*}
with some 
$ \theta_4= \theta_4(y) > - \frac 12$, and there exists a $c_4 \in \R$ with $\theta_4 \leq c_4$ for $y \geq -1$.
We plug this into (\ref{eq:int-n-dsmall}) and obtain
\begin{eqnarray*}
\int\limits_{- \infty}^\infty \Phi\Big(\frac y{\sqrt d}\Big)^{n-d} e^{-y^2}\,dy
& = &
e^{-  (n-d)\ln 2} 
\int\limits_{- \infty}^\infty 
e^{ \frac{2}{\sqrt \pi} \frac{n-d}{\sqrt d} y - \theta_4 \frac{n-d}{d} y^2 -y^2 } \,dy~.
\end{eqnarray*}
Since $\frac {n-d}d \to 0$ we assume that 
$ 1+\theta_4 \frac{n-d}{d} \geq 1- \frac 12  \frac{n-d}{d} > 0.$
As an estimate from above we have
\begin{eqnarray}
\int\limits_{- \infty}^\infty 
e^{ \frac{2}{\sqrt \pi} \frac{n-d}{\sqrt d} y - (1+\theta_4 \frac{n-d}{d}) y^2 } \,dy
& \leq & \nonumber
\int\limits_{- \infty}^\infty 
e^{ \frac{2}{\sqrt \pi} \frac{n-d}{\sqrt d} y - (1- \frac 12  \frac{n-d}{d}) y^2 } \,dy
\\ & = & \nonumber
e^{\frac {\frac 4\pi \frac{(n-d)^2}{d}}{4 (1- \frac 12 \frac {n-d}d)}}
\int\limits_{- \infty}^\infty 
e^{ -\left( \frac{\frac{2}{\sqrt \pi} \frac{n-d}{\sqrt d}}{2 \sqrt{(1- \frac 12  \frac{n-d}{d})}} - \sqrt{(1- \frac 12  \frac{n-d}{d})}y\right)^2 } \,dy
\\ & = & \nonumber
e^{ \frac 1\pi \frac{(n-d)^2}{d} (1+ O(\frac {n-d}d))}
\frac {\sqrt \pi}{\sqrt{(1- \frac 12  \frac{n-d}{d})} }
\\ & = & \label{eq:small-est-1}
\sqrt \pi e^{ \frac 1\pi \frac{(n-d)^2}{d} + O(\frac {(n-d)^3}{d^2}) + O(\frac{n-d}{d}) } .
\end{eqnarray}
The estimate from below is slightly more complicated. For $y \geq -\sqrt d$ there is an upper bound $c_4$ for $\theta_4$. Using this we have
\begin{eqnarray*}
\int\limits_{- \infty}^\infty 
e^{ \frac{2}{\sqrt \pi} \frac{n-d}{\sqrt d} y - \theta_4 \frac{n-d}{d} y^2 -y^2 } \,dy
& \geq  & \nonumber
e^{\frac 1\pi \frac{(n-d)^2}d}
\int\limits_{\frac 1{\sqrt \pi} \frac{n-d}{\sqrt d}- \sqrt d}^\infty 
e^{-\left( \frac{1}{\sqrt \pi} \frac{n-d}{\sqrt d} - y \right)^2 - c_4 \frac{n-d}{d} y^2  } \,dy
\\& \geq  & \nonumber
e^{\frac 1\pi \frac{(n-d)^2}d}
\int\limits_{- \infty}^{\sqrt d }   
e^{-y^2 - c_4 \frac{n-d}{d} \left( \frac{1}{\sqrt \pi} \frac{n-d}{\sqrt d} - y \right)^2  } \,dy~.
\end{eqnarray*}
Now we use $(a-b)^2 \leq 2a^2 + 2b^2$ which shows that 
\begin{eqnarray}
\int\limits_{- \infty}^\infty 
e^{ \frac{2}{\sqrt \pi} \frac{n-d}{\sqrt d} y - \theta_4 \frac{n-d}{d} y^2 -y^2 } \,dy
& \geq & \nonumber
e^{\frac 1\pi \frac{(n-d)^2}d + O(\frac{(n-d)^3}{d^2}) }
\int\limits_{- \infty}^{\sqrt d }   
e^{- (1+ 2 c_4 \frac{n-d}{d}) y^2 } \,dy
\\ & = & \nonumber
e^{\frac 1\pi \frac{(n-d)^2}d + O(\frac{(n-d)^3}{d^2}) }
\frac 1 {\sqrt{(1+ 2 c_4 \frac{n-d}{d})}}
\int\limits_{- \infty}^{\sqrt {d (1+ 2 c_4 \frac{n-d}{d})}}   
e^{- y^2 } \,dy
\\ & \geq & \label{eq:small-est-2}
e^{\frac 1\pi \frac{(n-d)^2}d + O(\frac{(n-d)^3}{d^2}) +O(\frac{n-d}{d})}
\int\limits_{- \infty}^{\sqrt d }   
e^{- y^2 } \,dy .
\end{eqnarray}
Recall the estimate for $\Phi (z) $ from Lemma \ref{le:Phi-asymp}, 
\begin{equation}\label{eq:small-est-3}
\int\limits_{- \infty}^{\sqrt d }   
e^{- y^2 } \,dy 
= 
\sqrt \pi \, \Phi(\sqrt d)
\geq 
\sqrt \pi (1- e^{-d})
=
\sqrt \pi e^{O(e^{-d}) }~.
\end{equation}
We combine equations \eqref{eq:small-est-1}, \eqref{eq:small-est-2} and \eqref{eq:small-est-3} and obtain
$$
\int\limits_{- \infty}^\infty 
e^{ \frac{2}{\sqrt \pi} \frac{n-d}{\sqrt d} y - \theta_4 \frac{n-d}{d} y^2 -y^2 } \,dy
= 
\sqrt \pi  e^{\frac 1\pi \frac{(n-d)^2}d + O(\frac{(n-d)^3}{d^2}) +O(\frac{n-d}{d}) + O(e^{-d}) }
$$
which yields Theorem \ref{th:facetn-dsmall}.

\medskip
\noindent {\bf Acknowledgement: } We would like to thank Imre B\'ar\'any and Gerg\H{o} Nemes for 
fruitful discussions.

\end{document}